\documentclass[a4paper, 11pt]{article}

\usepackage{amsmath}
\usepackage{amsfonts}
\usepackage{amssymb}
\usepackage[english]{babel}
\usepackage{graphicx}
\usepackage{amsthm}
\usepackage{mathrsfs}
\usepackage{pgf,tikz}
\usepackage[normalem]{ulem}
\usepackage{cancel}
\usepackage{amstext} 
\usepackage{array}   
\usepackage{caption}
\newcolumntype{C}{>{$}c<{$}} 
\definecolor{uququq}{rgb}{0.25,0.25,0.25}
\newtheorem{thm}{Theorem}[section]
\newtheorem{cor}[thm]{Corollary}
\newtheorem{lem}[thm]{Lemma}
\newtheorem{prop}[thm]{Proposition}
\newtheorem{constr}[thm]{Construction}

\usetikzlibrary{arrows}
\theoremstyle{definition}
\newtheorem{defn}[thm]{Definition}
\theoremstyle{definition}
\newtheorem{fact}[thm]{Fact}
\newtheorem{rem}[thm]{Remark}
\theoremstyle{definition}
\newtheorem{ex}[thm]{Example}
\newcommand{\N}{\mathbb{N}}
\newcommand{\Z}{\mathbb{Z}}

\newcommand{\F}{\mathbb{F}}

\newcommand{\comment}[1]{}
\numberwithin{equation}{section}

\begin{document}
\date{}
\title{Super-regular Steiner 2-designs}
\author{Marco Buratti \thanks{Dipartimento di Matematica e Informatica, Universit\`a di Perugia, via Vanvitelli 1, Italy. Email: buratti@dmi.unipg.it}\\
\\
Anamari Naki\'c \thanks{Faculty of Electrical Engineering and Computing,
University at Zagreb, Croatia, email: anamari.nakic@fer.hr}}
\date{\today}
\maketitle
\begin{abstract}
A design is additive under an abelian group $G$ (briefly, $G$-additive) 
if, up to isomorphism, its point set is contained in $G$ and the elements of each block sum up to zero.
The only known Steiner 2-designs that are $G$-additive for some $G$ have block size which is either a
prime power or a prime power plus one. Indeed they are the point-line
designs of the affine spaces $AG(n,q)$, the point-line designs of the projective planes 
$PG(2,q)$, the point-line designs of the projective spaces $PG(n,2)$ and a
sporadic example of a 2-(8191,7,1) design.  
In the attempt to find new examples, possibly with a block size which is 
neither a prime power nor a prime power plus one, we look for Steiner 
2-designs which are strictly $G$-additive (the point set is exactly $G$) and $G$-regular
(any translate of any block is a block as well) at the same time.
These designs will be called ``$G$-super-regular". Our main result is that 
there are infinitely many values of $v$ for which there exists a
super-regular, and therefore additive, $2$-$(v,k,1)$ design
whenever $k$ is neither singly even nor of the form $2^n3\geq12$.
The case $k\equiv2$ (mod 4) is a genuine exception whereas
$k=2^n3\geq12$ is at the moment a possible exception.
We also find super-regular $2$-$(p^n,p,1)$ designs with $p\in\{5,7\}$ and $n\geq3$
which are not isomorphic to the point-line design of $AG(n,p)$.
\end{abstract}

\small\noindent {\textbf{Keywords:}} (strictly) additive design; Steiner 2-design; automorphism group;
regular design; (strong) difference family; cyclotomy; difference matrix.

\normalsize
\eject
\section{Introduction}
We recall that a $t$-$(v,k,\lambda)$ design is a pair ${\cal D}=(V,{\cal B})$ with $V$ a set of $v$ {\it points} and ${\cal B}$
a collection of $k$-subsets of $V$, called {\it blocks}, such that any $t$-subset of $V$ is
contained in exactly $\lambda$ blocks. 
It is understood that $1\leq t\leq k\leq v$. The design is said to be {\it simple} if $\cal B$ is a set, i.e., if
it does not have repeated blocks. When $v=k$ we necessarily have only one block,
coincident with the whole point set $V$, repeated $\lambda$ times; in this case the
design is said to be {\it trivial}.

In the important case of $\lambda=1$ one speaks of a {\it Steiner $t$-design} and the notation $S(t,k,v)$
is often used in place of ``$t$-$(v,k,1)$ design".
An isomorphism between two designs $(V,{\cal B})$ and $(V',{\cal B}')$
is a bijection $f: V \longrightarrow V'$ turning ${\cal B}$ into ${\cal B}'$.
Of course the study of $t$-designs is done up to isomorphism.

An {\it automorphism group} of a design ${\cal D}=(V,{\cal B})$ is a group $G$ of permutations on $V$
leaving ${\cal B}$ invariant, i.e., a group of isomorphisms of $\cal D$ with itself. 
If $G$ acts regularly -- i.e., sharply transitively -- on the points, then
$\cal D$ is said to be {\it regular} under $G$ (briefly {\it $G$-regular}). Up to isomorphism, 
a $G$-regular design has point set $G$ and any translate $B+g$ of any block $B$ is a block as well.

For general background on $t$-designs we refer to \cite{BJL} and \cite{CD}.

Throughout the paper, every group will be assumed finite and abelian unless specified otherwise.
A subset $B$ of a group $G$ will be said {\it zero-sum} if its elements sum up to zero.
Representing the blocks of a design as zero-sum subsets 
of a commutative group turned out to provide an effective algebraic tool for studying their automorphisms 
(see, e.g., Example 3.7 in \cite{CFP2}).
Also, some recent literature provides remarkable examples of usage of zero-sum blocks in the construction 
of combinatorial designs (see, e.g., \cite{BCHW, K}).
This gives even more value to the  interesting theory on {\it additive designs} introduced in
\cite{CFP} by Caggegi, Falcone and Pavone.
Other papers on the same subject by some of these authors are \cite{C, CF,FP,Pavone}.
They say that a design is additive if it is embeddable into an abelian group 
in such a way that the sum of the elements in any block is zero. 

We reformulate just a little bit the terminology as follows.
\begin{defn}
A design $(V,{\cal B})$ is {\it additive under an abelian group $G$} (or briefly {\it $G$-additive}) 
if, up to isomorphism, we have:
\begin{itemize}
\item[$(1)$] $V\subset G$;
\item[$(2)$] $B$ is zero-sum $\forall B\in{\cal B}$.
\end{itemize}
If in place of condition (2) we have the much stronger condition
 \begin{itemize}
\item[$(2)_s$] ${\cal B}$ is precisely the set of all zero-sum $k$-subsets of $V$,
\end{itemize}
then the design is  {\it strongly $G$-additive}.
\end{defn}

By saying that a design is additive (resp., strongly additive)  we will mean that it
is $G$-additive (resp., strongly $G$-additive) for at least one abelian group $G$.
Note that we may have designs which are $G$-additive and\break $H$-additive at the same time even though
none of them is isomorphic to a subgroup of the other. 
For instance, it is proved in \cite{CFP} that if $p$ is a prime, then the point-line design of the affine
plane over $\Z_p$, which is obviously $\Z_p^2$-additive, is also strongly $\Z_p^{p(p-1)/2}$-additive. 

In general, to establish whether an additive design is also strongly additive appears to be
hard. Examples of additive $2$-$(v,k,\lambda)$ designs which are not strongly additive 
are given for $2\leq \lambda\leq 6$ in \cite{Pavone}. The question on whether 
there exists an additive Steiner 2-design which is not strongly additive is still open.

We propose to consider the $G$-additive designs whose set of points is precisely $G$ or $G\setminus\{0\}$.

\begin{defn}
An additive design is {\it strictly $G$-additive} or {\it almost strictly\break $G$-additive} 
if its point set is precisely $G$ or $G\setminus\{0\}$, respectively.
\end{defn}

Of course strictly additive (resp., almost strictly additive) means strictly $G$-additive  (resp., almost strictly $G$-additive)
for a suitable $G$.
As it is standard, $\F_q$ will denote the field of order $q$ and also, by abuse of notation, its additive group.
It is quite evident that the  $2$-$(q^n,q,1)$ design of points and lines of $AG(n,q)$ 
(the $n$-dimensional affine geometry over $\F_q$) is strictly $\F_{q^{n}}$-additive.

As observed in \cite{CF}, every $2$-$(2^v-1,2^k-1,\lambda)$ design over $\F_2$ is almost strictly 
$\F_{2^v}$-additive\footnote{A 2-design is {\it over $\F_q$} if its points are those of a 
projective geometry over $\F_q$ and the blocks are suitable subspaces of this geometry.}.
Thus there exists an almost strictly $\Z_2^v$-additive\break $2$-$(2^v-1,7,7)$ design for any odd $v\geq3$ in view of the main results in \cite{BN,Thomas}.
Also, there is an almost strictly $\Z_2^{v+1}$-additive $2$-$(2^{v+1}-1,3,1)$ design that is the point-line design of $PG(v,2)$ 
(the $v$-dimensional projective geometry over $\F_2$).
Finally, each of the well-celebrated designs found in \cite{BEOVW} and revisited in \cite{BNW}
is an almost strictly $\Z_2^{13}$-additive $2$-$(8191,7,1)$ design.

Almost all known additive designs have quite large values of $\lambda$.
For instance, it is proved in \cite{Pavone3} that if $p$ is an odd prime and $k=mp$ does not exceed $p^n$, 
then all zero-sum $k$-subsets of $\F_{p^n}$ form the block-set of a strongly additive 
$2$-$(p^n,k,\lambda)$ design with $\lambda={1\over p^n}{p^n-2\choose k-2}+{k-1\over p^n}{p^{n-1}-1\choose m-1}$. 
Applying this with $p=3$, $n=4$ and $k=6$, one finds a strongly additive  $2$-$(81,6,18551)$ design.

A sporadic example with $\lambda=2$ is the strictly $\Z_3^4$-additive $2$-$(81,6,2)$ design 
given in \cite{N} and some more classes with a relatively small $\lambda$ will be given in \cite{BN2}.
Anyway, what is most striking is the shortage of additive Steiner 2-designs. Up to now, only 
three classes were known: 
\begin{itemize}
\item[C1.] the designs of points and lines of the affine geometries over
any field $\F_q$ (which are strictly additive); 
\item[C2.] the designs of points and lines of the projective 
geometries over $\F_2$ (which are almost strictly additive); 
\item[C3.] the designs of points and lines
of the projective planes over any field $\F_q$ (which are strongly additive under a ``big" group \cite{CFP}).   
\end{itemize}
Nothing else was known, except for the sporadic
example of the $2$-$(8191,7,1)$ design mentioned above.

Hence to find additive Steiner 2-designs with new parameters,
in particular with block size which is neither a prime power nor a prime power plus one,
appears to be challenging. 

Note that the $2$-$(q^n,q,1)$ designs mentioned above are also $\F_{q^n}$-regular.
This fact suggests that a natural approach for reaching our target is to look for strictly $G$-additive Steiner 2-designs 
which are also $G$-regular. Let us give a name to the
designs with these properties.
\begin{defn}
A design is {\it super-regular} under an abelian group $G$ (or briefly {\it $G$-super-regular})
if it is $G$-regular and strictly $G$-additive at the same time. 
\end{defn}

Similarly as above, super-regular will mean $G$-super-regular for a suitable $G$. Super-regular Steiner
2-designs will be the central topic of this paper. Our main result will be the following. 
\begin{thm}\label{main}
Given $k\geq3$, there are infinitely many values of $v$ for which there
exists a super-regular $2$-$(v,k,1)$ design with the genuine exceptions of the singly even values
of $k$ and the possible exceptions of all $k=2^n3\geq12$.
\end{thm}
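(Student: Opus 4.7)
The plan is to recast super-regularity in terms of difference families of zero-sum base blocks, dispatch prime-power $k$ via affine geometries, handle composite $k$ through a seed-plus-composition strategy, and rule out $k\equiv 2\pmod 4$ by a short combinatorial argument.

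\medskip
\textbf{Reformulation.} Up to isomorphism a $G$-regular design is the development of a $(G,k,1)$-difference family $\{B_1,\ldots,B_s\}$. Since $\sum_{b\in B+g}b=\sum_{b\in B}b+kg$, every translate of a zero-sum block is zero-sum iff $kG=\{0\}$, i.e.\ $\exp(G)\mid k$. So a $G$-super-regular $(v,k,1)$-design is exactly a $(G,k,1)$-difference family of zero-sum $k$-subsets inside an abelian group $G$ with $\exp(G)\mid k$, and the task becomes: produce such families for infinitely many $v=|G|$.

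\medskip
\textbf{Construction.} When $k=q$ is a prime power with $q\geq 3$, the point-line design of $AG(n,q)$ already does the job: the additive group of $\F_{q^n}$ has exponent $p\mid q=k$, each line through $0$ is a one-dimensional $\F_q$-subspace $\F_q v$ whose sum $(\sum_{a\in\F_q}a)v$ vanishes (using $\sum_{a\in\F_q}a=0$ for every prime power $q>2$), and its translates are precisely the lines, giving an $\F_{q^n}$-super-regular $(q^n,q,1)$-design for each $n\geq 2$. For composite $k$ in the allowed range (odd, or doubly even and not $2^n\cdot 3\geq 12$), I would build a small ``seed'' zero-sum difference family inside a group $G_0$ of exponent dividing $k$ via cyclotomy in a finite field $\F_{q^n}$ --- choosing $q$ and the index of a multiplicative subgroup so that cyclotomic orbits are simultaneously zero-sum in $(\F_{q^n},+)$ and realize the required difference distribution --- and then inflate it by an $(H,k;\mu)$-difference matrix with zero column sums, producing zero-sum difference families in $G_0\times H^r$ for arbitrary $r$. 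The hard step is the seed: the zero-sum condition is additive whereas the control on differences is multiplicative, and their compatibility has to be engineered case by case. The open case $k=2^n\cdot 3\geq 12$ plausibly reflects the absence of such seeds for the available cyclotomic/DM tools.

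\medskip
\textbf{Obstruction at $k\equiv 2\pmod 4$.} Write $k=2m$ with $m$ odd. Let $B$ be any base block in a hypothetical $G$-super-regular design with stabilizer $H\leq G$, so $|H|\mid k$ and $B$ is a union of $k/|H|$ cosets of $H$. A direct count gives that every non-zero $d\in H$ appears with multiplicity exactly $k$ in the difference multiset $\Delta B$, so $B$ contributes $k/|H|$ to the weighted identity $\sum_i|H_i|^{-1}\mathrm{mult}_i(d)=\lambda=1$ satisfied by every $G$-regular difference family; this forces $|H|=k$ (single-coset base block) or $|H|=1$ (full orbit). In the first case $B=a+H$ has sum $ka+\sum_{h\in H}h=\sum_{h\in H}h$, which equals the unique involution of $H$ (since $|H|=2m$ with $m$ odd makes the Sylow 2-subgroup of $H$ of order $2$) and is therefore nonzero --- contradicting the zero-sum requirement. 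So every base block has trivial stabilizer, and $s=(v-1)/(k(k-1))$ must be a positive integer, whence $v\equiv 1\pmod p$ for every prime $p\mid k$. But $\exp(G)\mid k$ forces every prime factor of $v$ to divide $k$, so any such $p\mid v$ together with $p\mid v-1$ gives $p\mid 1$, a contradiction. Hence $v=1$, which is not an admissible design: $k\equiv 2\pmod 4$ is a genuine exception.
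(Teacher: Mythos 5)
Your treatment of the prime-power case and of the genuine exception $k\equiv 2\pmod 4$ is sound. In fact your exclusion argument is a nice self-contained variant of the paper's: you first show that a base block with non-trivial stabilizer $H$ forces $|H|=k$ (because its orbit alone would put $k/|H|$ blocks through a pair $\{0,d\}$ with $d\in H\setminus\{0\}$), then observe that if \emph{all} stabilizers were trivial one would get $v\equiv1\pmod{k(k-1)}$, which is incompatible with $\exp(G)\mid k$ unless $v=1$; so some block is a coset of a subgroup of order $k$, whose sum is the unique involution when $k$ is singly even. This is essentially Proposition 3.4(ii)+(iv) and Fact 2.2 of the paper, reached by a slightly different route.

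The genuine gap is the existence half for composite $k$, which is the actual content of the theorem: you defer the ``seed'' to unspecified, case-by-case cyclotomic engineering, but producing that seed uniformly in $k$ is precisely what Sections 5--8 of the paper accomplish, and nothing in your sketch substitutes for it. Concretely, three things are missing. First, by your own obstruction argument the design cannot come from an ordinary $(G,k,1)$-difference family: it must be a difference family \emph{relative} to a subgroup (or partial spread) of order $k$, so the target is an additive $(G\times\F_q,G\times\{0\},k,1)$-DF with $|G|=k$ and $q$ a power of a prime dividing $k$; your sketch never identifies this forbidden-difference structure. Second, because $\mathrm{rad}(q)$ divides $k$, the naive fix of translating a block by $-\sigma_B/k$ to make it zero-sum is unavailable ($k$ is not invertible in $\F_q$), so the additive and multiplicative (cyclotomic) constraints really do collide; the paper resolves this in Theorem 7.3(ii) by a delicate Weil-theorem selection of the last four lifted coordinates, with an inflated bound $q>(2k-3)^2\lambda^{4k-6}$ and a cardinality (not mere non-emptiness) estimate on the cyclotomic solution sets. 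Third, the seed itself is not case-by-case: it is a single uniform construction of an additive $(G,k,\lambda)$-strong difference family with $\gcd(k,\lambda)=1$ over a zero-sum group of order $k$, obtained from $r$ copies of the Paley difference multiset over the largest odd prime-power factor $q_0$ of $k$ together with $r-1$ copies of $\underline{r}\F_{q_0}$, inflated by the difference matrix whose columns are \emph{all} zero-sum $k$-tuples of a zero-sum group of order $r=k/q_0$ (Theorem 8.2); the coprimality $\gcd(k,\lambda)=1$ is what lets one find admissible $q\equiv\lambda'+1\pmod{2\lambda'}$ among powers of a prime divisor of $k$. Without these steps the statement for, say, $k=15$ or $k=20$ is not established.
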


As an immediate consequence, we have the existence of a strictly additive Steiner 2-design
with block size $k$ for any $k$ with the same exceptions as in the above statement.

A major disappointment is that the smallest $v$ for which, fixed $k$, we are able to say that a super-regular 
$2$-$(v,k,1)$ design exists, is huge. Suffice it to say that for $k=15$ this value is $3\cdot5^{31}$.
Consider, however, that there are several asymptotic results proving the existence of some designs 
as soon as the number of points is admissible and greater than a bound which is not even quantified. 
This happens, for instance, in the outstanding achievement 
by P. Keevash \cite{K} on the existence of Steiner $t$-designs. Usually, these asymptotic results
are obtained  via probabilistic methods and are not constructive. Our methods are algebraic and
``half constructive". We actually give a complete recipe for building a super-regular $2$-$(kq,k,1)$ design
under $G\times\F_q$ (with $G$ a suitable group of order $k$) whenever $q$ is an admissible power of a 
prime divisor of $k$ sufficiently large. Yet, in building 
every {\it base block} we have to pick the second coordinates of its elements, one by one, in a way 
that suitable cyclotomic conditions are satisfied and these choices are
not ``concrete"; they are realizable only  in view of some theoretical arguments deriving from the 
{\it theorem of Weil on multiplicative character sums}.

In the penultimate section we will have a look at the super-regular non-Steiner 2-designs.

The paper will be organized as follows. In the next section we first prove two elementary necessary
conditions for the existence of a strictly\break $G$-additive $2$-$(v,k,1)$ design: $G$ cannot have 
exactly one involution, and every prime factor of $v$ must divide $k$. 

In Section 3 we recall some basic facts on regular designs and show that any super-regular design
can be completely described in terms of differences.
In particular, we prove that a sufficient condition for the existence of a\break
$(G\times\F_q)$-super-regular design with $G$ a non-binary group of order $k$
and $q$ a power of a prime divisor of $k$ is the existence of an additive 
$(G\times\F_q,G\times\{0\},k,1)$ difference family.
This is a set ${\cal F}$ of zero-sum $k$-subsets of $G\times \F_q$ whose list of differences is $(G\times\F_q)\setminus(G\times\{0\})$.
In Section 4 we prove that such an $\cal F$ cannot exist for $k=2^n3\geq12$, clarifying in this way
why this case is so hard. 

In Sections 5 it is shown that a difference family as above can be realized by 
suitably lifting the blocks of an additive $(G,k,\lambda)$ strong difference family, that is 
a collection of zero-sum $k$-multisets on $G$ whose list of differences is $\lambda$ times $G$.

In Section 6, as a first application of the method of strong difference families,
we construct a $\F_{p^n}$-super-regular $2$-$(p^n,p,1)$ design not isomorphic 
to the point-line design of $AG(n,p)$ for $p\in\{5,7\}$ and every integer $n\geq3$.

In Section 7 a combined use of strong difference families and cyclotomy leads
to a very technical asymptotic result.
As a consequence of this result, the crucial ingredient
for proving the main theorem is an additive $(G,k,\lambda)$ strong difference family
with $G$ a non-binary group of order $k$ and $\gcd(k,\lambda)=1$.

In Section 8 this ingredient is finally obtained, also via difference matrices, for all 
the relevant values of $k$ and then the main theorem is proved.

As mentioned above, the final construction leads to super-regular 
Steiner 2-designs with a huge number of points. In Section 9 it is shown that when $k=15$ 
the smallest $v$ given by this construction is $3\cdot5^{9565939}$. On the other hand we 
also show that a clever use of strong difference families and cyclotomy allows to obtain 
smaller values of $v$. Still in the case $k=15$, we first obtain $v=3\cdot5^{187}$ and then 
$v=3\cdot5^{31}$ by means of two variations of the main construction. We also suggest a 
possible attempt to obtain $v=3\cdot5^7$ by means of a computer search.

In Section 10 we sketch how the same tools used with so much labor 
to construct ``huge" super-regular Steiner 2-designs allow to rapidly obtain super-regular non-Steiner 2-designs
with a ``reasonably small" $v$ at the expense of a possibly large $\lambda$
and the loss of simplicity (each of them has ${v\over k}$ blocks repeated $\lambda$ times).
For instance, we will show the existence of a non-simple super-regular 2-design with
block size $15$ having only $3\cdot5^3$ points and with $\lambda=21$.

In the last section we list some open questions.

\section{Elementary facts about strictly additive Steiner 2-designs}

In these preliminaries we establish some constraints on the parameters
of a strictly additive Steiner 2-design. First, it is useful to show two very elementary 
facts which we believe are folklore.

\begin{fact}\label{fact1} Every non-trivial subgroup of $\F_q^*$ (the multiplicative group of $\F_q$) is zero-sum.
\end{fact}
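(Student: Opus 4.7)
The plan is to use the standard averaging/symmetry argument that works for any finite subgroup of a field's multiplicative group. Let $H$ be a non-trivial subgroup of $\F_q^*$ and set $s := \sum_{h\in H} h$; the goal is to show $s=0$ in $\F_q$.

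The key observation is that for any fixed $h_0\in H$, the map $h\mapsto h_0 h$ is a bijection of $H$ onto itself, so
\[
h_0 \cdot s \;=\; \sum_{h\in H} h_0 h \;=\; \sum_{h\in H} h \;=\; s,
\]
which rearranges to $(h_0-1)\,s=0$. Because $H$ is non-trivial, we may choose $h_0\in H$ with $h_0\neq 1$, and then $h_0-1$ is a non-zero element of $\F_q$, hence invertible. Multiplying by $(h_0-1)^{-1}$ forces $s=0$, which is exactly the claim.

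I expect no real obstacle here; the statement is essentially folklore, and the only thing one has to be slightly careful about is not confusing ``$h_0-1=0$'' (which would be fatal) with ``$h_0=1$'', but the non-triviality hypothesis is precisely what rules this out. As a quick sanity check one can also argue cyclically: since $\F_q^*$ is cyclic, $H$ is cyclic of some order $d\geq 2$ generated by some $g\neq 1$, and $s=1+g+\cdots+g^{d-1}=(g^d-1)/(g-1)=0$ because $g^d=1$ and $g-1\in\F_q^*$. Either version fits in two or three lines.
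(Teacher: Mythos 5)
Your proof is correct, and your primary argument is a genuinely different route from the paper's. The paper takes what you relegate to a ``sanity check'': it writes $B$ as $\langle b\rangle$ for a generator $b\neq 1$ of order $n$, factors $b^n-1=(b-1)\sum_{i=0}^{n-1}b^i=0$, and cancels $b-1$. This is short but implicitly relies on the fact that every subgroup of $\F_q^*$ is cyclic. Your main argument — multiplying the sum $s=\sum_{h\in H}h$ by a fixed $h_0\neq 1$, using that $h\mapsto h_0h$ permutes $H$ to get $(h_0-1)s=0$, and cancelling $h_0-1$ — needs no structural input at all beyond $H$ being a finite subgroup of the multiplicative group of a field, so it is marginally more robust (it would apply verbatim to non-cyclic subgroups of the unit group of any integral domain). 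Both arguments hinge on the same cancellation step, namely that a non-zero element of a field is invertible, and you correctly isolate the role of non-triviality in guaranteeing $h_0-1\neq 0$. Either version is fully adequate here.
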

\begin{proof}
Let $B\neq\{1\}$ be a subgroup of $\F_q^*$ and let $n$ be its order. Then, if $b$ is a 
generator of $B$, we have $b^n-1=0$, i.e., $(b-1)(\sum_{i=0}^{n-1}b^i)=0$ in $\F_q$.
Thus $\sum_{i=0}^{n-1}b^i$, which is the sum of all elements of $B$, is equal to zero.
\end{proof}

The subgroup of an abelian group $G$ consisting of
all the involutions of $G$ and zero will be denoted by $I(G)$, i.e., $I(G)=\{g\in G : 2g=0\}$.
We say that $G$ is {\it binary} when $I(G)$ has order 2, i.e., when $G$ has
exactly one involution.

\begin{fact}\label{fact2} An abelian group $G$ is not zero-sum if and only if it is binary.
\end{fact}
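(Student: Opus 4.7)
The plan is to compute $\sum_{g\in G}g$ explicitly by reducing the sum over $G$ to a sum over $I(G)$, and then analyzing the structure of $I(G)$.

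First, I would exploit the pairing $g\leftrightarrow -g$. Since $G$ is abelian, for every $g\in G\setminus I(G)$ the element $-g$ is distinct from $g$ and also lies outside $I(G)$, so the elements outside $I(G)$ partition into pairs $\{g,-g\}$ each contributing $0$. Thus
\begin{equation*}
\sum_{g\in G} g \;=\; \sum_{g\in I(G)} g.
\end{equation*}
So the question reduces to deciding when the sum of all elements of $I(G)$ vanishes.

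Next I would use the structure of $I(G)$: by definition every non-zero element of $I(G)$ has order $2$, so $I(G)$ is an elementary abelian $2$-group, i.e.\ $I(G)\cong\mathbb{Z}_2^n$ for some $n\geq 0$. Split into three cases. If $n=0$ (no involutions), the reduced sum is empty, hence $G$ is zero-sum. If $n=1$ (so $G$ is binary), the reduced sum equals the unique non-zero involution, so $G$ is not zero-sum. If $n\geq 2$, I would compute coordinatewise in $\mathbb{Z}_2^n$: each coordinate position contains a $1$ in exactly $2^{n-1}$ of the $2^n$ elements, and $2^{n-1}\equiv 0\pmod 2$ as soon as $n\geq 2$, so the sum is the zero vector and $G$ is again zero-sum.

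Putting the three cases together gives precisely: $G$ fails to be zero-sum if and only if $|I(G)|=2$, i.e.\ if and only if $G$ is binary. There is no genuine obstacle here, as the statement is essentially folklore; the only point requiring a bit of care is to remember that $I(G)$ includes $0$, so that the pairing identity above is stated correctly and the degenerate case of $G$ of odd order (where $I(G)=\{0\}$) is covered automatically by $n=0$.
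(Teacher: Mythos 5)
Your proof is correct and follows essentially the same route as the paper: pair each $g\notin I(G)$ with $-g$ to reduce the total sum to the sum over $I(G)\cong\Z_2^n$, then split on $n$. The only difference is cosmetic, in the case $n\geq 2$: the paper closes it by viewing $I(G)\setminus\{0\}$ as the multiplicative group $\F_{2^n}^*$ and invoking Fact \ref{fact1}, whereas you count coordinatewise that each position carries $2^{n-1}\equiv 0 \pmod 2$ ones; both are valid.
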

\begin{proof}
The elements of $G\setminus I(G)$ are partitionable into 2-subsets 
consisting of opposite elements $\{g,-g\}$ so that $G\setminus I(G)$ is zero-sum. Then 
the sum of all elements of $G$ is equal to the sum of all elements of $I(G)$. 
Now note that either $I(G)=\{0\}$ or $I(G)$ is isomorphic to $\Z_2^n$
for some $n$. If $n=1$, then $G$ is binary and the
sum of all elements of $G$ is the non-zero element of $I(G)$, that is the only involution of $G$. 
If $n>1$, then $G$ is not binary and $I(G)\setminus\{0\}$ can be viewed as the multiplicative 
group of $\F_{2^n}^*$, hence it is zero-sum by Fact \ref{fact1}.
\end{proof}

From the above fact we immediately establish when the trivial $S(2,k,k)$ is strictly additive.
\begin{prop}\label{trivial}
The trivial $2$-$(k,k,1)$ design is strictly additive
if and only if $k\not\equiv2$ $($mod $4)$.
\end{prop}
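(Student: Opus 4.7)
The plan is to note that the trivial $2$-$(k,k,1)$ design has a single block equal to the whole point set, so being strictly $G$-additive for some abelian group $G$ of order $k$ is equivalent to requiring that $G$ itself be zero-sum. By Fact~\ref{fact2}, this is equivalent to asking that there exist a non-binary abelian group of order $k$. The whole proof therefore reduces to deciding for which $k$ such a group exists.

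I would split into cases according to $k\bmod 4$. If $k$ is odd, then $\Z_k$ has no involution at all, hence $I(\Z_k)=\{0\}$ has order $1$, so $\Z_k$ is non-binary and we are done. If $k\equiv 0\pmod 4$, write $k=2^a m$ with $a\geq 2$ and $m$ odd; choosing $G=\Z_2\times\Z_{2^{a-1}}\times\Z_m$ produces an abelian group whose $2$-Sylow subgroup has at least three involutions, so $G$ is non-binary. This covers the ``if'' direction.

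For the ``only if'' direction, suppose $k\equiv 2\pmod 4$, say $k=2m$ with $m$ odd. Then the fundamental theorem of finite abelian groups forces every abelian group of order $k$ to be isomorphic to $\Z_2\times H$ with $|H|=m$ odd; since $H$ has no involutions, such a group has exactly one involution and is therefore binary. By Fact~\ref{fact2}, the sum of all elements of $G$ is a non-zero element, so $G$ is not zero-sum and the only block of the trivial design fails to be zero-sum. Hence the trivial $2$-$(k,k,1)$ design cannot be strictly additive in this case.

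There is no real obstacle here: once Fact~\ref{fact2} is available, the argument is a short case analysis using the structure of finite abelian groups, with the case $k\equiv 2\pmod 4$ being the genuine obstruction identified by the single possible $2$-Sylow summand $\Z_2$.
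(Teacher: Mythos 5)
Your proposal is correct and follows essentially the same route as the paper: reduce the statement to the existence of a zero-sum (equivalently, by Fact~\ref{fact2}, non-binary) abelian group of order $k$, then do the case analysis on $k \bmod 4$, exhibiting a non-binary group when $k$ is odd or doubly even and noting that every abelian group of singly even order is binary. The only cosmetic difference is your choice of witness $\Z_2\times\Z_{2^{a-1}}\times\Z_m$ in the doubly even case versus the paper's $\Z_2^2\times\Z_{k/4}$, which is immaterial.
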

\begin{proof}
It is evident that the trivial $2$-$(k,k,1)$ design is strictly additive if and only if 
there exists an abelian zero-sum group of order $k$. Then we get the assertion 
from Fact \ref{fact2} and the following observations.

Every group of odd order $k$ is not binary.

Every group of singly even order $k$ is binary.

Among the groups of doubly even order $k$ we have 
$G=\Z_2^2\times \Z_{k/4}$ which is not binary.
\end{proof}

We recall that the {\it radical} of an integer $n$, denoted by $rad(n)$, is the product of all prime
factors of $n$. Thus, the fact that a finite field $\F_q$ has characteristic $p$ can be also expressed
by saying that $rad(q)=p$. The following property reduces significantly the admissible parameters 
for a strictly additive $2$-$(v,k,1)$ design. 

\begin{prop}\label{rad}
If a strictly $G$-additive $2$-$(v,k,1)$ design exists, then $G$ is zero-sum and
the radical of $v$ is a divisor of $k$.
\end{prop}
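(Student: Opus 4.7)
The plan is to exploit only the two defining properties---every block is zero-sum and every pair of distinct points lies in a unique block---via two local summations, one centred at $0$ and one at an arbitrary point $g\in G$.

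To obtain that $G$ is zero-sum, I would sum the elements appearing in the $r=(v-1)/(k-1)$ blocks through $0$. On one hand, each block is zero-sum, so the total is $0$. On the other hand, by the Steiner property the point $0$ occurs $r$ times and every other $g\in G$ occurs exactly once, so the total equals $\sum_{g\in G}g$. Equating the two expressions gives $\sum_{g\in G}g=0$.

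To obtain that $\mathrm{rad}(v)\mid k$, I would repeat the same trick at a generic point $g\in G$. The $r$ blocks through $g$ again sum collectively to $0$, while by the Steiner property and the previous step their total is $rg+\bigl(\sum_{x\in G}x-g\bigr)=(r-1)g$. Hence $(r-1)g=0$ for every $g\in G$, so the exponent $e$ of $G$ divides $r-1$. Substituting $r\equiv1\pmod e$ into the standard relation $r(k-1)=v-1$ and using $e\mid v$ yields $k\equiv0\pmod e$. Since by Cauchy's theorem every prime divisor of $v=|G|$ also divides $e$, each such prime divides $k$, which is precisely $\mathrm{rad}(v)\mid k$.

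I do not expect a serious obstacle here: the only subtle point is resisting the temptation to invoke a group action on blocks (the design is not assumed $G$-regular, only strictly $G$-additive) or to proceed via characters or Sylow subgroups. Once one notices that summing over ``blocks through a fixed point'' turns a Steiner identity into a sum over the whole of $G$, both conclusions drop out in a few lines.
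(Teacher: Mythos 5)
Your proposal is correct and follows essentially the same argument as the paper: both rest on the double summation over the blocks through a fixed point, equating the zero obtained from additivity with the expression $(r-1)g+\sum_{h\in G}h$ obtained from the Steiner property, and then extracting the divisibility of $k$ by each prime factor of $v$ via Cauchy's theorem. Your closing step (working modulo the exponent $e$ of $G$ in the relation $r(k-1)=v-1$) is a marginally cleaner packaging of the paper's prime-by-prime computation, but it is not a different route.
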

\begin{proof}
Let ${\cal D}=(G,{\cal B})$ be a $2$-$(v,k,1)$ design which is strictly additive under $G$.
For any fixed element $g$ of $G$, let ${\cal B}_g$ be the set of blocks through $g$ and 
recall that its size $r$ (the so-called {\it replication number} of $\cal D$) does not depend on $g$.
Now consider the double sum $$\sigma_g=\sum_{B\in{\cal B}_g}(\sum_{b\in B}b).$$
We have $\sum_{b\in B}b=0$ for every $B\in{\cal B}_g$ because $\cal D$ is strictly additive,
hence $\sigma_g$ is null. Also note that in the expansion of $\sigma_g$ the fixed element $g$ appears
as an addend exactly $r$ times whereas any other element $h$ of $G$ appears as an addend exactly once.
Thus $\sigma_g$ can be also expressed as $(r-1)g+\sum_{h\in G}h$.
We conclude that we have
$$(r-1)g+\sum_{h\in G}h=0 \quad \forall g\in G.$$
Specializing this to the case $g=0$ we get $\sum_{h\in G}h=0$
which means that $G$ is zero-sum. Hence the first assertion is proved and we can write
$$(r-1)g=0 \quad \forall g\in G.$$
This means that the order of every element of $G$ is a divisor of $r-1$.
Let $p$ be a prime divisor of $v$, set $v=pw$, and
take an element $g$ of $G$ of order $p$ (which exists by the theorem of Cauchy). 
For what we said, $p$ divides $r-1$. Now recall that $r={v-1\over k-1}$, hence $r-1={v-k\over k-1}$. Thus 
we can write ${pw-k\over k-1}=pn$ for some integer $n$
which gives $pw-k=(k-1)pn$. This equality implies that $p$ divides $k$. 
Thus every prime factor of $v$ is also a prime factor of $k$ and the second assertion follows.
\end{proof}

In particular, considering that every abelian group of singly even order is binary,
we can state the following.

\begin{cor}\label{v singly even}
A strictly additive $2$-$(v,k,1)$ design with $v$ singly even cannot exist.
\end{cor}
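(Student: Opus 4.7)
The plan is to derive the corollary as an immediate consequence of Proposition~\ref{rad} together with Fact~\ref{fact2}. The single substantive ingredient I would need to pin down is the elementary group-theoretic observation that every abelian group of singly even order is binary; everything else is already in hand.

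First I would argue for contradiction. Suppose a strictly $G$-additive $2$-$(v,k,1)$ design exists with $v\equiv2\pmod 4$. Then $|G|=v$ is singly even, so the Sylow $2$-subgroup of $G$ has order exactly $2$ and is therefore $\Z_2$. Since the $2$-torsion $I(G)$ of an abelian group sits inside the Sylow $2$-subgroup, we conclude $|I(G)|=2$, i.e., $G$ is binary in the sense of the definition preceding Fact~\ref{fact2}.

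Next I would invoke Fact~\ref{fact2}, which asserts that an abelian group is not zero-sum precisely when it is binary. Combined with the previous step, $G$ is not zero-sum. However Proposition~\ref{rad} says that whenever a strictly $G$-additive Steiner $2$-design exists, $G$ must be zero-sum. This is the desired contradiction, so no such design can exist.

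I do not foresee any real obstacle here: the whole argument is a two-line syllogism chaining Proposition~\ref{rad} (zero-sum forced) with Fact~\ref{fact2} (binary groups are the sole obstruction to being zero-sum), once one notes that ``singly even order'' forces the Sylow $2$-subgroup to be $\Z_2$ and hence $G$ to be binary. The only thing worth being careful about is stating this last observation explicitly rather than treating it as obvious, since it is exactly the link that makes the corollary work.
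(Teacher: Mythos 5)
Your proof is correct and follows exactly the route the paper intends: the corollary is stated right after the remark that every abelian group of singly even order is binary, and it is obtained by combining the first assertion of Proposition~\ref{rad} ($G$ must be zero-sum) with Fact~\ref{fact2} (binary groups are not zero-sum). Your explicit Sylow-subgroup justification of the "singly even implies binary" step is a welcome elaboration of what the paper leaves as an aside, but the argument is the same.
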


In the next section we will see that in a super-regular $2$-$(v,k,1)$ design the radicals of $v$ and $k$ are even equal.

\section{Difference families}

We need to recall some
classic results on regular designs.

The {\it list of differences} of a subset $B$ of a group $G$ is the 
multiset $\Delta B$ of all possible differences $x-y$ with $(x,y)$ an ordered pair of
distinct elements of $B$. More generally, if ${\cal F}$ is a set of subsets
of $G$, the list of differences of $\cal F$ is the multiset union $\Delta{\cal F}=\biguplus_{B\in {\cal F}}\Delta B$.

Let $H$ be a subgroup of a group $G$.
A set ${\cal F}$ of $k$-subsets of $G$ is a $(G,H,k,1)$ difference family (briefly DF) 
if $\Delta{\cal F}=G\setminus H$. 

The members of such a DF are called {\it base blocks} and their number is clearly
equal to ${v-h\over k(k-1)}$ where $v$ and $h$ are the orders of $G$ and $H$, respectively. 
Thus a necessary condition for its existence is that $v-h$ is divisible by $k(k-1)$. 
It is also necessary that $I(G)$ is a subgroup of $H$ since in a list of 
differences every involution necessarily appears an even number of times.

If $G$ has order $v$ and $H=\{0\}$, one usually speaks of an {\it ordinary} $(v,k,1)$-DF in $G$. 
Instead, when $|H|=h>1$ one speaks of a $(v,h,k,1)$-DF in $G$ {\it relative to $H$} or,
more briefly, of a {\it relative} $(v,h,k,1)$-DF.

For general background on difference families as above we refer to \cite{BJL,CD}.

More generally, one can speak of a {\it difference family relative to a partial spread of $G$}, that is 
a notion introduced by the first author in \cite{JSPI}. 
A {\it partial spread} of a group $G$ is a set $\cal H$ of subgroups of $G$ whose mutual intersections are trivial.
It is a {\it spread} of $G$ when the union of its members is the whole $G$.
Also, it is said {\it of type $\tau$} to express that the multiset of the orders of its members is $\tau$.
In particular, to say that $\cal H$ is of type $\{k^s\}$ means that $\cal H$ has exactly $s$ 
members and all of them have order $k$. 

Given a partial spread $\cal H$ of a group $G$, a set $\cal F$ of $k$-subsets of $G$
is said to be a $(G,{\cal H},k,1)$ difference family if $\Delta{\cal F}$ is the set of all elements 
of $G$ not belonging to any member of $\cal H$. 
If $G$ has order $v$ and $\cal H$ is of type $\tau$,
one also speaks of a $(v,\tau,k,1)$-DF in $G$ relative to $\cal H$.
If $\tau=\{k^s\}$ for some $s$, the obvious necessary conditions 
for its existence are the following: 
\begin{equation}\label{necessaryspread}
k \ | \ v;\quad {v\over k}\equiv 1 \ ({\rm mod}  \ k-1);\quad s\equiv1 \ ({\rm mod} \ k);\quad I(G)\subset\bigcup_{H\in{\cal H}}H
\end{equation}

Clearly, a $(G,H,k,1)$-DF can be seen as a difference family relative to a partial spread 
of size 1. 

The following theorem is a special case of a general result concerning regular
{\it linear spaces} \cite{JSPI}.

\begin{thm}\label{regular}
Let $G$ be an abelian group of order $v$.
A $G$-regular $2$-$(v,k,1)$ design may exist only 
for $v\equiv1$ or $k$ $($mod $k(k-1))$  and it is equivalent to:
\begin{itemize}
\item an ordinary $(v,k,1)$-DF in $G$ when $v\equiv1$ $($mod $k(k-1))$;
\item a $(v,\{k^s\},k,1)$-DF in $G$ for some $s$ when $v\equiv k$ $($mod $k(k-1))$.
\end{itemize}
\end{thm}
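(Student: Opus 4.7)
The plan is to analyze the $G$-orbits on blocks via a double count of differences. For a block $B$, I set $H=\mathrm{Stab}_G(B)$; since $G$ acts freely on points, $H$ acts freely on $B$, so $|H|\mid k$ and the orbit of $B$ has length $v/|H|$. The Steiner property supplies the global count: each nonzero $g\in G$ is realized by exactly $v$ ordered point-pairs, each lying in a unique block, so $g$ appears exactly $v$ times across $\bigcup_B \Delta B$.

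The central ``gap lemma'' I intend to prove is that $|H|\in\{1,k\}$. For $h\in H\setminus\{0\}$, the identity $B+h=B$ makes $(b,b-h)$ a valid pair in $B$ for every $b\in B$, so $h$ has multiplicity $k$ in $\Delta B$; the orbit of $B$ thus contributes $(v/|H|)\cdot k$ copies of $h$ to the total difference list, and bounding this by $v$ forces $|H|\geq k$, hence $|H|=k$ and $B$ is a coset of $H$ (translate so that $B=H$). An identical count shows that two distinct ``subgroup-type'' stabilizers $H_i,H_j$ must intersect trivially, else a common nonzero element would be overcounted to multiplicity at least $2v$. Hence the collection $\mathcal{H}=\{H_1,\ldots,H_s\}$ of subgroup-type stabilizers is a partial spread of type $\{k^s\}$, and every other orbit has trivial stabilizer and full length $v$.

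Letting $C_1,\ldots,C_t$ represent the remaining orbits, the block count $tv+sv/k=v(v-1)/(k(k-1))$ rearranges to $v=1+(k-1)(tk+s)$. If $s=0$ then $v\equiv 1\pmod{k(k-1)}$ and $\{C_1,\ldots,C_t\}$ is an ordinary $(v,k,1)$-DF in $G$. If $s\geq 1$ then the presence of the order-$k$ subgroup $H_1$ forces $k\mid v$; reducing the relation modulo $k$ yields $s\equiv 1\pmod{k}$, so $v\equiv k\pmod{k(k-1)}$, and since the subgroup orbits already saturate $\bigcup_i H_i\setminus\{0\}$ in the global difference list, $\{C_1,\ldots,C_t\}$ must be a $(v,\{k^s\},k,1)$-DF relative to $\mathcal{H}$. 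The converse direction is straightforward: the $G$-translates of the base blocks, together with all cosets of each $H_i$ in the relative case, form a $G$-regular $2$-$(v,k,1)$ design precisely because the resulting difference list covers each nonzero element of $G$ exactly once. I expect the only substantive step to be the gap lemma; everything else is pure bookkeeping.
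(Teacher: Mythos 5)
Your proof is correct. Note, though, that the paper itself does not prove this theorem: it is quoted as a special case of a result on regular linear spaces and attributed to \cite{JSPI}, with only the constructive (converse) half recorded in Remark \ref{rem}. Your converse coincides with that remark, and your forward direction -- the double count showing each nonzero $g$ occurs exactly $v$ times in $\biguplus_{B}\Delta B$, the gap lemma $|\mathrm{Stab}_G(B)|\in\{1,k\}$ obtained by comparing the orbit contribution $(v/|H|)\cdot k$ against $v$, the trivial-intersection argument for the stabilizers, and the orbit-length bookkeeping $v=1+(k-1)(tk+s)$ with $s\equiv 1\pmod k$ when $s\geq 1$ -- is the standard specialization of the linear-space argument and is sound. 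One point worth making explicit, since the paper remarks that the theorem is \emph{false} for non-abelian $G$: your gap lemma is exactly where commutativity enters. The claim that $h\in H\setminus\{0\}$ has multiplicity $k$ in $\Delta B$ rests on the pairs $(b,b-h)$ all having the \emph{same} translation-invariant difference $h$; in a non-abelian group the translation-invariant difference of the corresponding pairs is a conjugate of $h$ depending on $b$, and the count collapses. Flagging that dependence would make your write-up complete.
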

We remark that the above theorem is false when $G$ is non-abelian.
\begin{rem}\label{rem}
It is useful to recall the constructive part of the proof of the above theorem
(which also works when $G$ is not abelian).
\begin{itemize}
\item[(r1)] The set of all the translates of the base blocks of an ordinary $(v,k,1)$-DF in $G$ form the block-set of 
a $G$-regular $2$-$(v,k,1)$ design.
\item[(r2)] If ${\cal F}$ is a $(v,\{k^s\},k,1)$-DF in $G$ relative to $\cal H$,
then the set of all the translates of the base blocks of $\cal F$ together 
with all the right cosets of all the members of $\cal H$ form the block-set of a $G$-regular $2$-$(v,k,1)$ design.
\end{itemize}
\end{rem}

It is immediate from Theorem \ref{regular} that any
$G$-super-regular $2$-$(v,k,1)$ design is generated by a suitable difference family.
Let us see some other consequences.

\begin{prop}\label{k=4n+2}
If there exists a $G$-super-regular $2$-$(v,k,1)$ design, then
we have:
\begin{itemize}
\item[(i)] the order of every element of $G$ is a divisor of $k$;
\item[(ii)] $v\equiv k$ $($mod $k(k-1))$;
\item[(iii)] $rad(v)=rad(k)$;
\item[(iv)] $k$ is not singly even.
\end{itemize}
\end{prop}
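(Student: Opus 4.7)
The plan is to prove the four assertions in order, each one feeding into the next, with (i) supplying the key algebraic identity.  A super-regular design is by definition additive, so any block $B$ satisfies $\sum_{b\in B}b=0$; and since it is $G$-regular, every translate $B+g$ is also a block and hence also zero-sum.  Expanding $\sum_{b\in B}(b+g)=0$ and subtracting yields $kg=0$ for every $g\in G$, which is precisely (i): the exponent of $G$ divides $k$.

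For (ii) I would invoke Theorem \ref{regular}, which forces $v\equiv 1$ or $v\equiv k\pmod{k(k-1)}$.  The first alternative is incompatible with (i): $v\equiv 1\pmod k$ gives $\gcd(v,k)=1$, but by Cauchy every prime dividing $v=|G|$ must also divide the exponent of $G$, hence divide $k$; with $v>1$ this is impossible.  Part (iii) is then an immediate corollary of (i)-(ii) together with Proposition \ref{rad}: the latter gives $rad(v)\mid k$, so $rad(v)\mid rad(k)$, while (ii) written as $v=k(1+t(k-1))$ for some $t\ge 0$ shows $k\mid v$, so $rad(k)\mid rad(v)$.

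The genuine work is in (iv), and this is the step I expect to be the main obstacle, since it has to combine (i)--(iii) with the subgroup structure extracted from the difference-family description of the design.  Assume for contradiction $k\equiv 2\pmod 4$.  By (i) the $2$-Sylow subgroup $S$ of $G$ has exponent dividing $2$ and is therefore elementary abelian; by Proposition \ref{rad} the group $G$ is zero-sum, so Fact \ref{fact2} forces $G$ to be non-binary, giving $|S|\ge 4$.  If $v=k$ the $2$-part of $v$ would equal the $2$-part of $k$, namely $2$, contradicting $|S|\ge 4$.  Otherwise $v>k$, and combining (ii) with Theorem \ref{regular} and Remark \ref{rem}(r2) presents the design as a $(v,\{k^s\},k,1)$-DF relative to a partial spread $\cal H$; the counting condition from (\ref{necessaryspread}) (together with $v>k$) forces $s\ge 1$, so $\cal H$ contains at least one subgroup $H\le G$ of order $k$.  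By the construction of (r2), $H$ itself is a block of the design and is therefore zero-sum.  But since the $2$-part of $k$ is exactly $2$, the $2$-Sylow of $H$ has order $2$, making $H$ binary --- which by Fact \ref{fact2} prevents $H$ from being zero-sum.  This contradiction finishes (iv).
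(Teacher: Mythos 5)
Your proof is correct and follows essentially the same route as the paper: (i) by translating a zero-sum block, (ii) by using (i) to rule out the alternative $v\equiv1\pmod{k(k-1)}$ from Theorem \ref{regular}, (iii) by combining Proposition \ref{rad} with $k\mid v$, and (iv) by exhibiting a block that is a subgroup of order $k$ and invoking Fact \ref{fact2} to see that a zero-sum group cannot be binary. The only cosmetic remark is that in the case $v=k$ of (iv) your claim $|S|\ge 4$ tacitly uses that $v$ is even (a non-binary group could instead have $I(G)=\{0\}$), but this follows at once from (iii) since $2\mid k$, so the argument stands.
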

\begin{proof} Let ${\cal D}$ be a $G$-super-regular $2$-$(v,k,1)$ design.

(i).\quad Take any element $g$ of $G$ and any block $B$ of ${\cal D}$. By definition
of a $G$-regular design $B+g$ is a block of ${\cal D}$ as well. Also, by definition
of strictly $G$-additive design both $B$ and $B+g$ are zero-sum.
Thus, considering that the elements of $B+g$ sum up to $(\sum_{b\in B} b)+kg$, we deduce
that $kg=0$, i.e., the order of $g$ divides $k$.

(ii).\quad If $v\equiv1$ (mod $k(k-1))$, then $k$ divides $v-1$. 
By (i), the order of any $g\in G$ divides $k$, hence it also divides $v-1$.
On the other hand $ord(g)$ divides $v$ by Lagrange's theorem.
Thus $ord(g)$ would be a common divisor of $v$ and $v-1$ whichever
is $g\in G$. This would imply $v=1$ which is absurd. 
We conclude, by Theorem \ref{regular}, that we have $v\equiv k$ (mod $k(k-1))$.

(iii).\quad We already know from Proposition \ref{rad} that $rad(v)$ divides $rad(k)$. 
On the other hand $k$ divides $v$ because of condition (ii) proved above, 
hence $rad(k)$ divides $rad(v)$ and the assertion follows.

(iv).\quad $\cal D$ has at least one block $B$ 
which is a subgroup of $G$ of order $k$ in view of (ii) and Remark \ref{rem}(r2). 
Considering that $B$ is zero-sum  by assumption, the group $B$ is not binary by Fact \ref{fact2}, 
hence $k\not\equiv2$ (mod 4).
\end{proof}

Note that condition (i) of the above lemma implies, in particular, that if $p$ is a prime 
factor of $k$ but $p^2$ does not divide $k$, then the Sylow $p$-subgroup of $G$ is 
elementary abelian. Hence,
when $k$ is square-free, $G$ is necessarily a direct product of elementary abelian groups. 

In the following a $(G,{\cal H},k,1)$-DF will be said {\it additive} if all its base blocks are zero-sum and all the 
members of $\cal H$ are zero-sum (i.e., not binary) as well.

The above results (Theorem \ref{regular}, Remark \ref{rem} and Proposition \ref{k=4n+2}) allow us to state the following.
\begin{lem}\label{additiveDF}
There exists a  $G$-super-regular $2$-$(v,k,1)$ design if and only if $G$ satisfies conditions (i), (ii) of
Proposition \ref{k=4n+2} and there
exists an additive $(G,{\cal H},k,1)$-DF of type $\{k^s\}$ for some $s$.
\end{lem}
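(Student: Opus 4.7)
The plan is to chain together Theorem~\ref{regular}, Remark~\ref{rem}(r2) and Proposition~\ref{k=4n+2}; the real content of the lemma is just the observation that condition (i) of Proposition~\ref{k=4n+2} is precisely what is needed so that a zero-sum $k$-subset of $G$ stays zero-sum under translation, since the sum of $X+g$ equals $\bigl(\sum_{x\in X}x\bigr)+kg$ and $kg=0$ for every $g\in G$.

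For the ``only if'' direction, let $\mathcal{D}$ be a $G$-super-regular $2$-$(v,k,1)$ design. Conditions (i) and (ii) of Proposition~\ref{k=4n+2} come for free from that proposition. By (ii) and Theorem~\ref{regular}, $\mathcal{D}$ is equivalent to some $(v,\{k^s\},k,1)$-DF $\mathcal{F}$ in $G$ relative to a partial spread $\mathcal{H}$ of type $\{k^s\}$, with $\mathcal{D}$ described as in Remark~\ref{rem}(r2). Every base block $B\in\mathcal{F}$ is a block of $\mathcal{D}$, and every $H\in\mathcal{H}$ is the coset $H+0$, hence also a block of $\mathcal{D}$. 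Since $\mathcal{D}$ is strictly $G$-additive, both $B$ and $H$ are zero-sum, which is exactly the additivity of $\mathcal{F}$.

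For the ``if'' direction, start from an additive $(G,\mathcal{H},k,1)$-DF $\mathcal{F}$ of type $\{k^s\}$ under a group $G$ satisfying (i) and (ii). By Remark~\ref{rem}(r2), the translates $B+g$ (for $B\in\mathcal{F}$, $g\in G$) together with the cosets $H+g$ (for $H\in\mathcal{H}$, $g\in G$) form the block set of a $G$-regular $2$-$(v,k,1)$ design $\mathcal{D}$ with point set $G$. Invoking the observation of the first paragraph, each such block is zero-sum: the base blocks and the members of $\mathcal{H}$ are zero-sum by additivity of $\mathcal{F}$, and translation preserves this property because $kg=0$ for every $g$ by condition (i). Hence $\mathcal{D}$ is strictly $G$-additive, and therefore $G$-super-regular.

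There is no substantial obstacle here; the lemma is really a repackaging of results already available. The only small points to keep in mind are, on the ``only if'' side, that each $H\in\mathcal{H}$ automatically appears as a block of $\mathcal{D}$ (which is what forces it to be zero-sum, and hence non-binary by Fact~\ref{fact2}), and, on the ``if'' side, the explicit use of condition (i) to propagate zero-sumness from the base objects to all their translates and cosets.
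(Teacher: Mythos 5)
Your proof is correct and follows exactly the route the paper intends: the paper gives no explicit proof of this lemma, stating only that it follows from Theorem~\ref{regular}, Remark~\ref{rem} and Proposition~\ref{k=4n+2}, and your argument is precisely the fleshing-out of that chain. The two points you single out — that each $H\in\mathcal{H}$ is itself a block (forcing it to be zero-sum) and that condition (i) is what propagates zero-sumness to all translates via $kg=0$ — are indeed the only non-trivial observations needed.
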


The next lemma will be our main tool to construct super-regular Steiner 2-designs.
\begin{lem}\label{additive(kq,k,k,1)}
Let $G$ be a zero-sum group of order $k$ and let $q\equiv1$ $($mod $k-1)$ be a power of a prime divisor $p$ of $k$.
If there exists an additive $(G\times\F_q,G\times\{0\},k,1)$-DF, then there exists  a $(G\times\F_{q^n})$-super-regular
$2$-$(kq^n,k,1)$ design for every $n\geq1$.
\end{lem}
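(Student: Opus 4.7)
My plan is to use Lemma \ref{additiveDF}: it suffices to construct an additive $(G\times\F_{q^n},\mathcal{H},k,1)$-DF of type $\{k^s\}$ for some $s$, after checking conditions (i) and (ii) of Proposition \ref{k=4n+2} for the group $G\times\F_{q^n}$. Those reduce to the exponent dividing $k$ (immediate from $|G|=k$ and $p\mid k$) and the congruence $q^n\equiv 1\pmod{k-1}$, which follows by raising $q\equiv 1\pmod{k-1}$ to the $n$-th power. I will take $\mathcal{H}=\{G\times\{0\}\}$, giving a type $\{k^1\}$ spread.

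For the DF itself, I would \emph{lift} the given additive $(G\times\F_q,G\times\{0\},k,1)$-DF $\mathcal{F}_1$ by means of a transversal $R$ for $\F_q^*$ in $\F_{q^n}^*$, so that $\F_{q^n}^*=\bigsqcup_{r\in R}r\F_q^*$. For each $r\in R$ I consider the injective group homomorphism
\[
\phi_r:G\times\F_q\longrightarrow G\times\F_{q^n},\qquad \phi_r(g,a)=(g,ra),
\]
and set $\mathcal{F}=\{\phi_r(B):r\in R,\ B\in\mathcal{F}_1\}$. The key observation is that $(r,a)\mapsto ra$ is a bijection $R\times\F_q^*\to\F_{q^n}^*$ by definition of a transversal, so the lifted difference multiset satisfies
\[
\Delta\mathcal{F}=\bigcup_{r\in R}\phi_r(\Delta\mathcal{F}_1)=G\times\F_{q^n}^*=(G\times\F_{q^n})\setminus(G\times\{0\}),
\]
each element appearing exactly once. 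Additivity of the lifted blocks is immediate, since $\phi_r$ is a group homomorphism and $B$ is zero-sum, and $G\times\{0\}$ is zero-sum because $G$ is.

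I do not foresee a substantive obstacle; the only subtle point is the involutions condition $I(G\times\F_{q^n})\subseteq G\times\{0\}$ required of the partial spread, but this comes for free because the very existence of the base DF $\mathcal{F}_1$ forces $I(\F_q)=\{0\}$, hence $p$ is odd and $\F_{q^n}$ carries no involutions either. With $\mathcal{F}$ and $\mathcal{H}$ thus constructed, Lemma \ref{additiveDF} produces the claimed $(G\times\F_{q^n})$-super-regular $2$-$(kq^n,k,1)$ design.
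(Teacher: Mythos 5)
Your proposal is correct and follows essentially the same route as the paper: the paper also verifies conditions (i), (ii) of Proposition \ref{k=4n+2} for $G\times\F_{q^n}$ and then multiplies the second coordinates of the base blocks by a complete system of representatives for the cosets of $\F_q^*$ in $\F_{q^n}^*$, which is exactly your family $\{\phi_r(B)\}$, before invoking Lemma \ref{additiveDF}. Your extra remark on the involution condition is sound (the existence of $\mathcal{F}_1$ forces $q$ odd) but is not needed, since that condition is only a necessary consequence of the difference family you have already exhibited.
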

\begin{proof}
The hypotheses easily imply that $G\times\F_{q^n}$ satisfies conditions (i), (ii) of
Proposition \ref{k=4n+2} for every $n$.  
Let $\cal F$ be an additive $(G\times\F_q,G\times\{0\},k,1)$-DF, 
and let $S$ be a complete system of representatives for
the cosets of $\F_q^*$ in $\F_{q^n}^*$. For every base block $B$ of $\cal F$
and every $s\in S$, let $B\circ s$ be the subset of $\F_{q^n}^*$ obtained from $B$ by 
multiplying the second coordinates of all its elements by $s$. It is easy to see that 
${\cal F}\circ S:=\{B\circ s \ | \ B\in{\cal F}; s\in S\}$ is an additive  
$(G\times\F_{q^n},G\times\{0\},k,1)$-DF. The assertion then follows 
from Lemma \ref{additiveDF}.
\end{proof}

Recall that, for the time being, only three classes of non-trivial additive Steiner 2-designs are known, 
that are classes C1, C2, C3 mentioned in the introduction. The set of their block sizes clearly coincides
with the set $Q \ \cup \ (Q+1)$ where $Q$ is the set of all prime powers.
Thus, for now, we do not have any example of an additive non-trivial Steiner 
2-design whose block size is neither a  prime power nor a prime power plus one.
Also, for $k\in(Q+1)\setminus Q$ we have only one example that is the projective
plane of order $k-1$.
Let us examine which is the very first possible attempt of filling these gaps using the above lemma. 
The first $k$ which is neither a prime power nor a prime power plus one is 15. 
We can try to find a super-regular $2$-$(15q,15,1)$ design using Lemma \ref{additive(kq,k,k,1)},
i.e., via an additive $(\Z_{15}\times\F_q,\Z_{15}\times\{0\},15,1)$-DF 
with $q$ a power of 3 or a power of 5. The first case is ruled out by Theorem \ref{nonexistence} in the next section.
Thus $q$ has to be taken among the powers of 5. 
More precisely, in view of the condition  $q\equiv1$ (mod 14),
we have to take $q=5^{6n}$ for some $n$.  We conclude that 
$2$-$(15\cdot5^6,15,1)$ is the first parameter set of a super-regular Steiner 2-design with 
block size belonging to $(Q+1)\setminus Q$
potentially obtainable  via Lemma \ref{additive(kq,k,k,1)}. Unfortunately, we are
not able to construct a design with these parameters. In the penultimate section 
we indicate a possible attempt to get it by means of a computer search. 
In that same section we will prove the existence of a super-regular $2$-$(15\cdot5^{30},15,1)$ design.

\section{One more necessary condition and the hard case $k=2^n3$ with $n\geq2$}\label{2^n3}
 
The following result will lead to one more condition on the parameters
of a super-regular Steiner 2-design. This result will also imply that a non-trivial
super-regular 2-$(v,k,1)$ design with $k=2^n3$ may be generated by
an additive $(v,k,k,1)$-DF only if a very strong condition on $n$ holds. As a matter
of fact we suspect that this conditions is never satisfied. 
This is why in our main result we are not able to say 
anything about the case $k=2^n3$ which appears to us very hard.

\begin{thm}\label{nonexistence}
If a super-regular $2$-$(v,k,1)$ design is generated by an additive $(v,k,k,1)$-DF
and $k\equiv\pm3$ $($mod $9)$, then ${v\over k}\equiv1$ $($mod $3)$.
\end{thm}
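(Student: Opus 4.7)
The plan is first to reduce the conclusion to a divisibility on the number of base blocks, and then to prove this divisibility via a character computation in $\Z[\omega]$ (with $\omega = e^{2\pi i/3}$). Let $\mathcal{F} = \{B_1, \ldots, B_m\}$ be the base blocks and $H \leq G$ the relative subgroup of order $k$. By Proposition \ref{k=4n+2}(ii), $v \equiv k \pmod{k(k-1)}$ gives $v = k + mk(k-1)$, hence $v/k = 1 + m(k-1)$. Since $k \equiv \pm 3 \pmod 9$ forces $k - 1 \equiv -1 \pmod 3$, one has $v/k \equiv 1 - m \pmod 3$; so it suffices to prove $3 \mid m$.

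I will pick a character $\chi : G \to \langle \omega \rangle$ of order $3$ which is non-trivial on $H$. Such a $\chi$ exists because $3 \mid |H|$ furnishes a subgroup of order $3$ inside $H$, and any non-trivial character of it can be extended (using linear algebra over $\F_3$ on the elementary abelian $3$-Sylow of $G$, then trivially on the $3'$-part) to an order-$3$ character of $G$ still non-trivial on $H$. Setting $S_i := \sum_{b \in B_i} \chi(b) \in \Z[\omega]$, the standard character computation on a difference family gives
\[
\sum_{i=1}^{m} |S_i|^2 \;=\; mk + \sum_{g \in G}\chi(g) - \sum_{h \in H}\chi(h) \;=\; mk,
\]
where the last two sums vanish because $\chi$ is non-trivial on both $G$ and $H$.

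The core step is the claim $S_i \in 3\Z[\omega]$ for each $i$. Fix $B = B_i$ and let $n_j := |\{b \in B : \chi(b) = \omega^j\}|$ for $j = 0, 1, 2$. The zero-sum condition on $B$ yields $1 = \chi(0) = \prod_{b \in B}\chi(b) = \omega^{n_1 + 2n_2}$, so $n_1 \equiv n_2 \pmod 3$. Combined with $n_0 + n_1 + n_2 = k \equiv 0 \pmod 3$, this forces $n_0 \equiv n_1 \equiv n_2 \pmod 3$. Writing $n_j = r + 3 s_j$ and using $1 + \omega + \omega^2 = 0$ gives $S_i = 3(s_0 + s_1 \omega + s_2 \omega^2) \in 3\Z[\omega]$, hence $|S_i|^2 \in 9\,\Z_{\geq 0}$. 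Summing over $i$ gives $9 \mid mk$, and since $k \equiv \pm 3 \pmod 9$ forces the $3$-adic valuation of $k$ to be exactly $1$, we conclude $3 \mid m$, as required.

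The main obstacle is recognising the key divisibility $S_i \in 3\Z[\omega]$: it is precisely the combined use of the zero-sum condition on $B$ and the hypothesis $3 \mid k$ that forces the character values $\chi(b)$ to be \emph{equidistributed} modulo $3$ among the three cube roots of unity, yielding the crucial $3$-adic gain; all the other pieces are routine character orthogonality on a difference family.
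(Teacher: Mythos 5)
Your proof is correct, and at its heart it uses the same key observation as the paper, repackaged in character-sum language. The paper writes $G=\Z_3\times G_1$ and $H=\Z_3\times H_1$ (justified by the fact, noted after Proposition \ref{k=4n+2}, that the Sylow $3$-subgroup of $G$ is elementary abelian), projects each base block onto the $\Z_3$ factor, and derives precisely your congruence $n_0\equiv n_1\equiv n_2\pmod 3$ for the multiplicities of $0,1,2$ in the projected multiset, from the same two ingredients: $3\mid k$ and the zero-sum property of the block. The accounting afterwards differs. Where you compute $|S_i|^2$ and use orthogonality to get $9\mid mk$ and hence $3\mid m$, the paper counts the multiplicity of $0$ in the difference list of each projected block, namely $\sum_j n_j(n_j-1)\equiv 0\pmod 3$, sums over blocks to conclude $3\mid k_1(v_1-1)$ where $k=3k_1$ and $v=kv_1$, and reads off $v_1\equiv1\pmod 3$ since $3\nmid k_1$. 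The two conclusions are equivalent: $v_1-1=m(k-1)$ with $3\nmid k_1(k-1)$, so $3\mid k_1(v_1-1)$ if and only if $3\mid m$. Neither route buys substantially more than the other. Your version avoids the explicit splitting of $G$ (at the cost of the small extension argument producing $\chi$) and makes the $3$-adic gain very transparent via $S_i\in 3\Z[\omega]$; the paper's version stays entirely inside elementary counting and has the side benefit of exhibiting the projected family as a $(\Z_3,k,\lambda)$ strong difference family, a notion it reuses in later sections.
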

\begin{proof}
Let $\cal F$ be an additive $(G,H,k,1)$-DF generating a $G$-super-regular $2$-$(v,k,1)$ design with $k\equiv\pm3$ $($mod $9)$.
Thus $G$ is a group of order $v\equiv k$ (mod $k(k-1))$, say $v=kv_1$, 
and $H$ is a subgroup of $G$ of order $k=3k_1$ with $k_1$ not divisible by 3.
For what we observed immediately after Proposition \ref{k=4n+2}
the Sylow 3-subgroup of $G$ is elementary abelian. For this reason,
for every two subgroups of $G$ of order 3 there exists an automorphism of $G$
mapping one into the other.
Then, up to isomorphism, we may assume that $G=\Z_3\times G_1$
with $G_1$ of order $k_1v_1$ and $H=\Z_3\times H_1$ with $H_1$ a subgroup of $G_1$ of order $k_1$.

For each $B\in{\cal F}$, let $\overline{B}$ be the $k$-multiset on $\Z_3$
that is the projection of $B$ on $\Z_3$ and set $\overline{\cal F}=\{\overline{B} \ | \ B\in{\cal F}\}$. 
It is clear that $\Delta\overline{\cal F}$ is the projection of $\Delta{\cal F}$ on $\Z_3$.
Thus, considering that $\Delta{\cal F}=(\Z_3\times G_1)\setminus(\Z_3\times H_1)$
by assumption, it is clear that $\Delta\overline{\cal F}$ is $\lambda$ times $\Z_3$ with $\lambda$
equal to the size of $G_1\setminus H_1$, i.e., $\lambda=k_1(v_1-1)$. 
Using some terminology that we will recall in the next section,
$\overline{\cal F}$ is essentially a $(\Z_3,k,\lambda)$ {\it strong difference family}.

Take any block $\overline{B}$ of $\overline{\cal F}$ and for $i=0,1,2$, let $\mu_i$ 
be the multiplicity of $i$ in $\overline{B}$. 
Clearly, we have $\mu_0+\mu_1+\mu_2=k$, hence $\mu_0+\mu_1+\mu_2\equiv0$ (mod 3).
Considering that $\cal F$ is additive, $B$ is zero-sum and then $\overline{B}$ is zero-sum as well.
It follows that $\mu_1+2\mu_2=0$ in $\Z_3$, i.e., $\mu_1\equiv\mu_2$ (mod 3). We easily conclude that
\begin{equation}\label{mu}
\mu_0\equiv\mu_1\equiv\mu_2 \quad {\rm (mod \ 3)}
\end{equation}

Now, let $\nu$ be the multiplicity of zero in $\Delta\overline{B}$ and note that we have 
$$\nu=\mu_0(\mu_0-1)+\mu_1(\mu_1-1)+\mu_2(\mu_2-1),$$
hence $\nu\equiv0$ (mod 3) in view of (\ref{mu}).

Note that $\lambda$ can be seen as the sum of the multiplicities of zero in the lists of
differences of the blocks of $\overline{\cal F}$.
For what we just saw, all these multiplicities are zero (mod 3) and then 
$\lambda\equiv0$ (mod 3). Recalling that $\lambda=k_1(v_1-1)$ we conclude that
$v_1\equiv1$ (mod 3) which is the assertion.
\end{proof}

As a consequence, we get the following non-existence result.

\begin{thm}
A super-regular $2$-$(v,k,1)$ design with $k\equiv\pm3$ $($mod $6)$ and 
${v\over k}\equiv2$ $($mod $3)$ cannot exist.
\end{thm}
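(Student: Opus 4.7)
The plan is to reduce the statement to Theorem \ref{nonexistence} by invoking Lemma \ref{additiveDF} and then running a Sylow-theoretic argument that forces the associated partial spread to have a single member. Assume for contradiction that a $G$-super-regular $2$-$(v,k,1)$ design $\mathcal{D}$ exists with $k\equiv\pm3$ $($mod $6)$ and $v/k\equiv2$ $($mod $3)$. Lemma \ref{additiveDF} provides an additive $(G,\mathcal{H},k,1)$-DF with $\mathcal{H}$ a partial spread of $G$ of type $\{k^s\}$ for some $s\geq1$; the aim is to show $s=1$.

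The Sylow step runs as follows. By Proposition \ref{k=4n+2}, $k\mid v$ and every element of $G$ has order dividing $k$, so the Sylow $3$-subgroup $P$ of $G$ has order equal to the $3$-part of $v$. Since $v/k\equiv2$ $($mod $3)$ is coprime to $3$, this $3$-part coincides with the $3$-part of $k$, and the hypothesis $k\equiv\pm3$ $($mod $6)$ forces $3\mid k$, so $P\neq\{0\}$. On the other hand, for each $H_i\in\mathcal{H}$ the index $[G:H_i]=v/k$ is coprime to $3$, so the image of $P$ in the quotient $G/H_i$ vanishes, i.e.\ $P\subseteq H_i$ for every $i$. If $s\geq2$, the pairwise trivial intersection of the partial spread would give $P\subseteq H_1\cap H_2=\{0\}$, contradicting $P\neq\{0\}$. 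Hence $s=1$ and $\mathcal{F}$ is an additive $(v,k,k,1)$-DF.

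With the reduction to a single subgroup accomplished, Theorem \ref{nonexistence} applies and yields $v/k\equiv1$ $($mod $3)$, in direct conflict with the standing assumption $v/k\equiv2$ $($mod $3)$. I expect the main obstacle to be precisely this reduction to $s=1$: the coprimality of $v/k$ with $3$ is exactly what allows us to trap the Sylow $3$-subgroup $P$ inside every $H_i$, and without this input one would have to contend with the more intricate combinatorics of a non-trivial partial spread rather than a single subgroup—in which case the clean mod-$3$ counting of zeros in lists of differences from the proof of Theorem \ref{nonexistence} no longer closes the argument.
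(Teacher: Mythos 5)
Your proof is correct and follows essentially the same route as the paper: both arguments show that the generating difference family cannot be relative to a partial spread with $s\geq 2$ members of order $k$, reduce to an additive $(v,k,k,1)$-DF, and then invoke Theorem \ref{nonexistence} to contradict $v/k\equiv2$ (mod $3$). The only difference is in the spread step: the paper traps the \emph{unique} subgroup of order $3$ of $G$ (using that $9\nmid v$) inside every spread member, whereas you trap the whole Sylow $3$-subgroup of $G$ using that each index $[G:H_i]=v/k$ is coprime to $3$; your variant is marginally more robust since it does not need $9\nmid k$. One caveat, which your argument shares with the paper's own proof: Theorem \ref{nonexistence} is stated for $k\equiv\pm3$ (mod $9$), and $k\equiv\pm3$ (mod $6$) does not imply this (take $k=45$), so the final invocation is not literally licensed by the stated hypothesis; however, the paper's table of excluded parameter pairs contains even values such as $k=60$ (which satisfy $k\equiv\pm3$ (mod $9$) but not $k\equiv\pm3$ (mod $6$)), which strongly suggests the modulus $6$ in the theorem statement is a typo for $9$, so the mismatch originates in the statement rather than in your reasoning.
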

\begin{proof}
Assume that there exists a $G$-super-regular $2$-$(v,k,1)$ design $\cal D$
with $v$ and $k$ as in the statement. Then $\cal D$
cannot be generated by an additive $(v,k,k,1)$-DF by Theorem \ref{nonexistence}. 
It follows that $\cal D$ is generated by an additive $(v,\{k^s\},k,1)$-DF for a suitable $s>1$
by Theorem \ref{additiveDF}. On the other hand the hypothesis obviously imply that $v$, as $k$, is divisible 
by 3 but not by 9. Thus $G$ necessarily has only one subgroup of order 3, hence it cannot have a partial spread 
with two distinct members of order $k$. We got a contradiction.
\end{proof}

Each of the following pairs $(v,k)$ satisfies the admissibility conditions $v\equiv k$ (mod $k(k-1))$
and $rad(v)=rad(k)$ given by Proposition \ref{k=4n+2}. Yet, for each of them no super-regular 2-$(v,k,1)$ design
exists in view of the above theorem.
\begin{center}
\begin{tabular}{|c|c|}
\hline
$v$  & $k$\\
\hline
$3\cdot2^6\cdot5^{10}$ & $3\cdot2^2\cdot5$ \\
\hline
$3\cdot2^{18}\cdot11^{10}$ & $3\cdot2^2\cdot11$\\
\hline
$3\cdot5\cdot11^7$ & $3\cdot5\cdot11$\\
\hline
$3\cdot2^{21}\cdot7^{3}$ & $3\cdot2^3\cdot7$\\ 
\hline
$3\cdot5^{22}\cdot13^{4}$ & $3\cdot5\cdot13$\\ 
\hline
$3\cdot2^{26}\cdot5^{6}$ & $3\cdot2^4\cdot5$\\ 
 \hline
\end{tabular}
\end{center}

Another consequence of Theorem \ref{nonexistence} is the following.
\begin{thm}
Let $k=2^n3$ and assume that there exists a super-regular $2$-$(v,k,1)$ design generated by an additive 
$(v,k,k,1)$-DF. Then $v=2^{oi+n}3$ where $o$ is the order of $2$
in the group of units $($mod $k-1)$ and $0\leq i\leq \lfloor{n^2-n\over o}\rfloor$.
\end{thm}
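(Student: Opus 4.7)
My plan is to combine the structural constraints from Proposition \ref{k=4n+2} and Theorem \ref{nonexistence}, and then to identify the remaining obstruction that yields the upper bound on $i$. For the form of $v$, I would proceed as follows. By Proposition \ref{k=4n+2}(iii), $rad(v) = rad(k) = 6$, so $v = 2^a \cdot 3^b$ for some $a, b \geq 1$. Proposition \ref{k=4n+2}(i) forces every element of $G$ to have order dividing $k = 2^n \cdot 3$; in particular the Sylow $3$-subgroup of $G$ is elementary abelian, hence isomorphic to $\Z_3^b$, giving $v_1 := v/k = 2^{a-n} \cdot 3^{b-1}$. Since $k = 2^n \cdot 3 \equiv \pm 3 \pmod 9$, Theorem \ref{nonexistence} applies and yields $v_1 \equiv 1 \pmod 3$; this rules out $b \geq 2$ (which would give $3 \mid v_1$), so $b = 1$ and $v = 2^a \cdot 3$.

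Proposition \ref{k=4n+2}(ii) then gives $v_1 = 2^{a-n} \equiv 1 \pmod{k-1}$, so by definition of $o$ we have $o \mid (a-n)$, i.e.\ $a = n + oi$ for some integer $i \geq 0$. This produces the claimed form $v = 2^{oi+n} \cdot 3$. The congruence $2^{a-n} \equiv 1 \pmod 3$ additionally forces $a - n$ to be even, a condition compatible with---and, when $o$ is odd, strengthening---the upper bound to come.

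The hard part is the upper bound $i \leq \lfloor n(n-1)/o \rfloor$, equivalently $a \leq n^2$. My plan is to bound the $2$-rank $r$ of the Sylow $2$-subgroup $G_1$ of $G$ by $n$; since $G_1$ has exponent dividing $2^n$ by Proposition \ref{k=4n+2}(i), writing $G_1 \cong \bigoplus_{j=1}^r \Z_{2^{e_j}}$ with $e_j \leq n$ yields $a = \sum_j e_j \leq nr$, so $r \leq n$ forces $a \leq n^2$. To establish the rank bound I would refine the technique of Theorem \ref{nonexistence}: rather than project onto $\Z_3$, exploit the $2^r$ real characters $\chi \colon G_1 \to \{\pm 1\}$ (equivalently, project onto $G_1/2G_1 \cong \F_2^r$). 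The zero-sum condition $\sum_j y_j = 0$ on each base block $B = \{(x_j, y_j)\}_{j=1}^k$ forces $\chi(\sum_j y_j) = 1$; meanwhile the covering identity $\Delta\mathcal F = G \setminus H$ translates, via orthogonality, into $\sum_{B \in \mathcal F} |\chi(B)|^2 = (M-1)k$ for every non-trivial character $\chi$ of $G$ trivial on $H$. Aggregating over all such $\chi$ and comparing with a direct count of order-$2$ differences in $G \setminus H$, whose total equals $2^r - 2^s$ where $s = \text{rank}(H_1)$, should force $r \leq n$ and hence $a \leq nr \leq n^2$. Extracting the sharp constant $n$---as opposed to a weaker numerical bound---is the technical crux.
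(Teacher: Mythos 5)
Your derivation of the form $v=2^{oi+n}3$ is correct and follows the same route as the paper: Proposition \ref{k=4n+2}(ii)--(iii) give $v=2^a3^b\equiv k \pmod{k(k-1)}$, Theorem \ref{nonexistence} kills $b\geq 2$, and reducing modulo $k-1$ gives $o\mid (a-n)$. The skeleton of your upper-bound argument is also the paper's: write the Sylow $2$-subgroup as $\bigoplus_{j=1}^{r}\Z_{2^{e_j}}$ with each $e_j\leq n$ by Proposition \ref{k=4n+2}(i), so that $a\leq nr$, and then try to show $r\leq n$.

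The gap is precisely the step you flag as ``the technical crux'': you never actually prove $r\leq n$, and the character-sum machinery you sketch (the identity $\sum_{B}|\chi(B)|^2=(M-1)k$ aggregated over characters trivial on $H$) is both speculative and far heavier than needed. The bound follows from an elementary fact already recorded in Section 3 of the paper: in any list of differences every involution occurs an even number of times (since $x-y$ and $y-x$ coincide when their common value is an involution), whereas $\Delta\mathcal{F}=G\setminus H$ covers each of its elements exactly once. Hence $G\setminus H$ contains no involutions, i.e.\ $I(G)\subseteq H$. Since $I(G)\cong\Z_2^{r}$ and $|H|=k=2^n3$, this forces $2^{r}\leq 2^n$, i.e.\ $r\leq n$, and then $a\leq nr\leq n^2$ gives $oi+n\leq n^2$ as required. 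Your ``direct count of order-$2$ differences in $G\setminus H$ equal to $2^r-2^s$'' is circling this observation --- the point is that this count must be \emph{zero}, which immediately yields $I(G)=I(H)\leq H$ --- but as written your argument does not close, and there is no reason to expect the orthogonality identity alone to extract the sharp constant $n$.
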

\begin{proof}
Let $\cal D$ be a $G$-super-regular $2$-$(v,k,1)$ design with $k=2^n3$ and 
assume that $\cal D$ is generated by an additive $(G,H,k,1)$-DF so that
$G$ has order $v$ and $H$ is a subgroup of $G$ of order $k$. 
By Proposition \ref{k=4n+2} (ii) and (iii) we have $v=2^a3^b\equiv k$ (mod $k(k-1))$. 
Thus, reducing mod $k$ and mod $k-1$ we respectively get 
\begin{equation}\label{mod k & mod k-1}
2^a3^b\equiv 2^n3 \ (mod \ 2^n3) \quad {\rm and}\quad 2^a3^b\equiv 1 \ (mod \ 2^n3-1)
\end{equation}
From the first of the above congruences we deduce that $a\geq n$ and $b\geq 1$.
By Theorem \ref{nonexistence}
we must have $2^{a-n}3^{b-1}\equiv1$ (mod 3) which implies $b=1$.
Hence $v=2^a3$ with $a\geq n$. 
Multiplying the second congruence in (\ref{mod k & mod k-1}) by $2^n$ (which is the inverse of $3$ mod $k-1$), we get
$2^a\equiv2^n$ (mod $k-1$), i.e., $2^{a-n}\equiv1$ (mod $k-1$). This, by definition of $o$, means that
$a=oi+n$ for some integer $i$. Hence we have
\begin{equation}\label{v1}
v=2^{oi+n}3
\end{equation}
Now let $2^t$ be the order of $I(G)$ and recall that $I(G)$ is necessarily contained in $H$ so that we have $t\leq n$.
Up to isomorphism, by the fundamental theorem on abelian groups, we have 
$G=\Z_{2^{\alpha_1}}\times\dots\times\Z_{2^{\alpha_t}}\times\Z_3$ for a suitable
$t$-tuple $(\alpha_1,\dots,\alpha_t)$ of positive integers summing up to $a$. 
For $i=1,\dots,t$, there are elements of $G$ of order $2^{\alpha_i}$; for instance the element
whose $i$th coordinate is 1 and all the other coordinates are zero. Hence
$2^{\alpha_i}$ divides $k$ by Proposition \ref{k=4n+2}(i) and then $\alpha_i\leq n$ for $i=1,\dots,t$.
We deduce that we have 
\begin{equation}\label{v2}
v=|G|\leq(2^n)^t3\leq 2^{n^2}3
\end{equation}
Comparing (\ref{v1}) and (\ref{v2}) we get $oi+n\leq n^2$, i.e., $i\leq \lfloor{n^2-n\over o}\rfloor$
and the assertion follows.
\end{proof}

\begin{cor}
If $k=2^n3$ and there exists a non-trivial super-regular $2$-$(v,k,1)$ design 
generated by an additive $(v,k,k,1)$-DF, then the order of $2$ 
in the group of units of $\Z_{k-1}$ is less than $n^2-n$.
\end{cor}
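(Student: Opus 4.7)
The proof is a direct consequence of the preceding theorem; the plan is simply to translate non-triviality of the design into the condition $i\geq 1$ on the parameter $i$ that appears there, and then compare with the upper bound $i\leq\lfloor (n^2-n)/o\rfloor$ already established.

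First I would invoke the preceding theorem, which under the hypotheses gives $v=2^{oi+n}\cdot 3$ for some integer $i$ with $0\leq i\leq\lfloor (n^2-n)/o\rfloor$. The trivial $2$-$(k,k,1)$ design corresponds to $v=k=2^n\cdot 3$, which forces $oi+n=n$, i.e., $i=0$. A non-trivial super-regular design therefore requires $i\geq 1$. Combining this with the upper bound forces $\lfloor (n^2-n)/o\rfloor\geq 1$, which is equivalent to $o\leq n^2-n$.

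There is no real obstacle here: the corollary is essentially a repackaging of the upper bound already proved. The only subtlety worth flagging is the strict-versus-non-strict inequality in the statement. The argument as sketched yields $o\leq n^2-n$, and the extremal case $o=n^2-n$ forces $i=1$ and $v=2^{n^2}\cdot 3$. Chasing the equality cases in inequality (\ref{v2}) from the previous proof then pins down $G\cong\Z_{2^n}^n\times\Z_3$ with $I(G)\cong\Z_2^n$ sitting inside the unique available subgroup $H\cong\Z_2^n\times\Z_3$ of order $k$. Ruling out that this rigid configuration can support an additive $(G,H,k,1)$-DF would give the strict inequality; either way, the substantive content of the corollary (that $o$ is forced to be small compared to $n^2-n$) follows immediately from the theorem.
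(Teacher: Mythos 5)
Your deduction is exactly the one the paper intends: the corollary is stated without proof as an immediate consequence of the preceding theorem, and "non-trivial $\Rightarrow i\geq1 \Rightarrow o\leq oi\leq n^2-n$" is precisely that immediate consequence. Your caveat about strictness is well taken and is not a defect of your argument but a small imprecision in the paper: the theorem only excludes the boundary case $o=n^2-n$ if one additionally rules out the rigid configuration $i=1$, $v=2^{n^2}3$, $G\cong(\Z_{2^n})^n\times\Z_3$, which neither the theorem nor the paper does, so only $o\leq n^2-n$ is actually established. The discrepancy is harmless for the subsequent remark, which applies the corollary only in the contrapositive form with $o>n^2-n$.
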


We suspect that the order of 2 in the group of units of
$\Z_{2^n3-1}$ is always greater than $n^2-n$ but we are not able to prove it. For now, we are able to say that it is
true for $n\leq1000$ (checked by computer) and whenever $2^n3-1$ has a prime factor greater than $(n^2-n)^2$;
this is a consequence of a result proved in \cite{M}
according to which the order of 2 modulo an odd prime $p$ is almost always as large as the square root of $p$. 
Thus, for now, we can state the following.

\begin{rem}
Let $k=2^n3$ with $n\leq1000$ or $k$ has a prime factor greater than $(n^2-n)^2$. 
Then there is no value of $v$ for which a putative 
non-trivial super-regular $2$-$(v,k,1)$ design
may be generated by an additive $(v,k,k,1)$-DF.
\end{rem}

The above leads us to believe that the existence of a non-trivial super-regular $2$-$(v,2^n3,1)$ design
generated by a $(v,k,k,1)$-DF is highly unlikely. 
On the other hand such a design might be obtained via a difference 
family relative to a partial spread of size greater than 1.
For instance, we cannot rule out that there exists a $G$-super-regular $2$-$(3^{9}4^4,12,1)$ design
generated by an additive $(G,{\cal H},12,1)$-DF with $G=\F_{3^{9}}\times\F_{4^4}$ and $\cal H$
a partial spread of $G$ of type $\{12^{85}\}$.
Indeed $G$ satisfies conditions (ii), (iii) of Propositions \ref{k=4n+2} and the necessary
conditions (\ref{necessaryspread}) are also satisfied with an $\cal H$ constructible
as follows.
Take a (full) spread ${\cal H}_1$ of $\F_{4^4}$ consisting of
subgroups of $\F_{4^4}$ of order 4 and note that it has size ${4^4-1\over3}=85$.
Now take the (full) spread ${\cal H}_2$ of $\F_{3^{9}}$ consisting of all
subgroups of $\F_{3^{9}}$ of order 3 which has size ${3^{9}-1\over2}>85$. 
Thus it is possible to choose an injective map $f:{\cal H}_1\longrightarrow{\cal H}_2$ and
we can take ${\cal H}:=\{H\times f(H) \ | \ H\in{\cal H}_1\}$.
On the other hand to realize an additive $(G,{\cal H},12,1)$-DF with $G$ and ${\cal H}$
as above appears to be unfeasible; suffice it to say that it would have 38,166 base blocks. 
Also, the fact that the literature is completely lacking of constructions for $(v,\{k^s\},k,1)$
difference families with $s>1$, further underlines the difficulty of the problem.

\section{Strong difference families}\label{sectionSDF}

In view of Lemma \ref{additive(kq,k,k,1)} our target will be the construction of additive  
$(G\times\F_q,G\times\{0\},k,1)$ difference families with $G$ of order $k$ and $q$ a power of a prime divisor of $k$.
For this, we need one more variant of a difference family, that is a {\it strong difference family}.

The notion of list of differences of a subset of a group $G$ can be naturally
generalized to that of list of differences of a multiset on $G$ as follows.
If $B=\{b_1,\dots,b_k\}$ is a multiset on a group $G$, then the list of differences
of $B$ is the multiset $\Delta B$ of all possible differences $b_i-b_j$ with $(i,j)$
an ordered pair of distinct elements of $\{1,\dots,k\}$.

It is evident that the multiplicity of zero in $\Delta B$ is even. Indeed if $b_i-b_j=0$,
then $b_j-b_i=0$ as well. It is also evident that this multiplicity is equal to zero
if and only if $B$ does not have repeated elements, i.e., $B$ is a set.

By list of differences of a collection ${\cal F}$ of multisets on $G$ one
means the multiset union $\Delta{\cal F}=\biguplus_{B\in {\cal F}}\Delta B$.

\begin{defn}\label{SDF}
Let $G$ be a group of order $v$ and let ${\cal F}$ be a collection of
$k$-multisets on $G$. One says that ${\cal F}$ is a $(v,k,\lambda)$ strong difference family
in $G$ (or briefly a $(G,k,\lambda)$-SDF) if $\Delta{\cal F}$ covers 
every element of $G$ ($0$ included) exactly $\lambda$ times.
\end{defn}
Note that if $s$ is the number of blocks of a $(G,k,\lambda)$-SDF, then we necessarily
have $\lambda|G|=sk(k-1)$.

A SDF with only one block is called a {\it difference multiset} \cite{B99} or also
a {\it difference cover} \cite{Arasu}. 

\begin{ex}\label{5,5,4}
Take the $5$-multiset $B=\{0,1,1,4,4\}$ on $\Z_5$. 
Looking at its ``difference table"
\begin{center}
\begin{tabular}{|c|||c|c|c|c|c|c|c|c|c|c|c|c|c|c}
\hline {$$} & {\scriptsize$0$} & {\scriptsize$1$} & {\scriptsize$1$} & {\scriptsize$4$} & {\scriptsize$4$}   \\
\hline\hline\hline {\scriptsize$0$} & $\bullet$ & $\bf4$ & $\bf4$ & $\bf1$ & $\bf1$  \\
\hline {\scriptsize$1$} & $\bf1$ & $\bullet$ & $\bf0$ & $\bf2$ & $\bf2$  \\
\hline {\scriptsize$1$} & $\bf1$ & $\bf0$ & $\bullet$ & $\bf2$ & $\bf2$  \\
\hline {\scriptsize$4$} & $\bf4$ & $\bf3$ & $\bf3$ & $\bullet$ & $\bf0$  \\
\hline {\scriptsize$4$} & $\bf4$ & $\bf3$ & $\bf3$ & $\bf0$ & $\bullet$  \\
\hline
\end{tabular}\quad\quad\quad
\end{center}
we see that the singleton $\{B\}$ is a $(5,5,4)$-SDF in $\Z_5$. 
\end{ex}

Throughout the paper, the union of $n$ copies of a set or multiset $S$ will be denoted by
$\underline{n}S$. Thus the difference multiset of the previous example can be denoted
as $\{0\} \ \cup \ \underline{2}\{1,4\}$. Much more in general, we recall 
that if $q$ is an odd prime power and $\F_q^\Box$ is the set of non-zero squares of $\F_q$, 
then $\{0\} \ \cup \ \underline{2}\F_q^\Box$ is the so-called {\it $(q,q,q-1)$
Paley difference multiset of the first type} \cite{B99}.

We will say that a multiset on a group $G$ is zero-sum if the sum of all its elements
(counting their multiplicities) is zero. 
A SDF in $G$ will be said {\it additive} if
all its members are zero-sum. In view of Fact \ref{fact1} the Paley $(q,q,q-1)$ difference multisets
of the first type are additive provided that $q\neq3$.

\medskip
Strong difference families are a very useful tool to construct relative difference families.
Even though they were implicitly considered in some older literature, they have been formally introduced 
for the first time by the first author in \cite{B99}. After that, they turned out to be crucial in many constructions in design theory
(see, e.g., \cite{BBGRT,BuGio,BP,BYW,CCFW,CFW1,CFW2,FW,Momihara,YYL}).

The following construction explains how to use strong difference families in order to construct
relative difference families.

\begin{constr}\label{constr}
Let $\Sigma=\{B_1,\dots,B_s\}$ be a $(G,k,\lambda)$-SDF and let $q\equiv1$ (mod $\lambda$) be a prime
power.
Lift each block $B_h=\{b_{h1},\dots,b_{hk}\}$ of $\Sigma$ to a subset $\ell(B_h)=\{(b_{h1},\ell_{h1}),\dots,(b_{hk},\ell_{hk})\}$
of $G\times\F_q$. By definition of a strong difference family,
we have $\Delta{\cal F}=\biguplus_{g\in G}\{g\}\times\Delta_g$ where each $\Delta_g$ is a $\lambda$-multiset on $\F_q$. 
Hence, if the liftings have been done appropriately, it may happen that there exists a ${q-1\over\lambda}$-subset $M$ 
of $\F_q^*$ such that $\Delta_g\cdot M=\F_q^*$ for each $g\in G$.
In this case, it is easy to see that $${\cal F}=\bigl{\{}\{(b_{h1},\ell_{h1}m),\dots,(b_{hk},\ell_{hk}m)\} \ | \ 1\leq h\leq s; m\in M\bigl{\}}$$
is a $(G\times\F_q,G\times\{0\},k,1)$-DF. This DF is clearly additive in the additional hypothesis that $\Sigma$ is additive 
and each $\ell(B_h)$ is zero-sum. 
\end{constr}

In most of the cases the above construction is applied when each $\Delta_g$ is a complete system of representatives
for the cosets of the subgroup $C^\lambda$ of $\F_q^*$ of index $\lambda$, that is the group of non-zero $\lambda$-th
powers of $\F_q$. Indeed in this case we have $\Delta_g\cdot M=\F_q^*$ for each $g\in G$
with $M=C^\lambda$. Note, however, that $\Delta_g$ is of the form 
$\{1,-1\}\cdot\overline{\Delta}_g$ for every $g\in I(G)$, hence it contains pairs $\{x,-x\}$ of opposite elements. Thus, if the elements of $\Delta_g$ belong
to pairwise distinct cosets of $C^\lambda$, we necessarily have $-1\notin C^\lambda$, i.e., 
$q\equiv \lambda+1$ (mod $2\lambda)$. This explains why in the next Theorems \ref{SDF->DF} and \ref{additive version} we require
that this congruence holds.

\section{Anomalous $2$-$(q^n,q,1)$ designs}

Let us say that a $2$-$(q^n,q,1)$ design is {\it anomalous} if it is $\F_{q^n}$-super-regular but not isomorphic
to the design of points and lines of AG$(n,q)$.

\begin{prop}
If there exists an anomalous $2$-$(q^n,q,1)$ design, then there exists
an anomalous $2$-$(q^m,q,1)$ design for any $m\geq n$.
\end{prop}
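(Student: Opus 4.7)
The plan is to lift the anomalous design from $\F_{q^n}$ to $\F_{q^m}$ by a multiplicative construction in the spirit of Lemma \ref{additive(kq,k,k,1)}. By Lemma \ref{additiveDF} (combined with Theorem \ref{regular} and Remark \ref{rem}), the hypothesised anomalous design arises from an additive $(\F_{q^n},{\cal H},q,1)$-DF $\cal F$, where $\cal H$ is a partial spread of $\F_{q^n}$ of type $\{q^t\}$ for some $t\geq 1$ (the ordinary case of Theorem \ref{regular} is ruled out because $q^n\not\equiv 1$ $({\rm mod}\ q(q-1))$). Fix a complete system of coset representatives $S$ of $\F_{q^n}^*$ in $\F_{q^m}^*$ containing $1$, and set
$${\cal F}'=\{sB\ |\ B\in{\cal F},\ s\in S\}, \qquad {\cal H}'=\{sH\ |\ H\in{\cal H},\ s\in S\}.$$

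The subsets $s\F_{q^n}^*$ $(s\in S)$ partition $\F_{q^m}^*$. From this it is routine to verify that $\cal H'$ is a partial spread of $\F_{q^m}$ whose members are additive subgroups of order $q$ that pairwise intersect trivially, and that each $sH\in{\cal H}'$ is zero-sum, since the map $x\mapsto sx$ is an additive automorphism carrying the zero-sum set $H$ onto $sH$. For the list of differences one computes
$$\Delta{\cal F}'=\bigsqcup_{s\in S}s\,\Delta{\cal F}=\bigsqcup_{s\in S}s\bigl(\F_{q^n}\setminus\textstyle\bigcup{\cal H}\bigr)=\F_{q^m}\setminus\textstyle\bigcup{\cal H}',$$
and each $sB$ is zero-sum because $\sum_{x\in sB}x=s\sum_{b\in B}b=0$. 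Thus ${\cal F}'$ is an additive $(\F_{q^m},{\cal H}',q,1)$-DF and, by Lemma \ref{additiveDF}, generates an $\F_{q^m}$-super-regular $2$-$(q^m,q,1)$ design ${\cal D}'$.

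It remains to show that ${\cal D}'$ is not isomorphic to the point-line design of AG$(m,q)$. The crucial observation is that the blocks of $\cal D'$ entirely contained in the subgroup $\F_{q^n}\subset\F_{q^m}$ form precisely the block set of the original anomalous design: indeed, if a block $sB+g$ or $sH+g$ of $\cal D'$ lies inside $\F_{q^n}$, then the difference of any two of its elements sits in $s\F_{q^n}^*\cap\F_{q^n}^*$, which forces $s=1$, and then automatically $g\in\F_{q^n}$. Hence $\F_{q^n}$ induces on $\cal D'$ a closed subdesign isomorphic to the original design. If $\cal D'$ were isomorphic to AG$(m,q)$, then under the isomorphism the image of $\F_{q^n}$ would be a closed point-subset of AG$(m,q)$ of size $q^n$, which must be an $n$-dimensional affine subspace and would therefore carry AG$(n,q)$ as induced subdesign; but this subdesign is the original anomalous design, contradicting its anomalous character. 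The main obstacle is exactly this final rigidity step, relying on the standard fact that closed point-subsets of AG$(m,q)$ are affine subspaces; everything preceding it is pure bookkeeping modelled on Lemma \ref{additive(kq,k,k,1)}.
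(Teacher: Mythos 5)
Your proof is correct, but it takes a genuinely different route from the paper's. The paper argues by direct substitution inside the geometry: identify an $n$-dimensional subspace $V$ of AG$(m,q)$ with the point set of the given anomalous design, and replace the lines of AG$(m,q)$ lying inside $V$ (and, to keep the block set translation-invariant, inside each coset of $V$ — the $\F_{q^n}$-regularity of the given design makes the replacement on cosets well defined) by the blocks of that design; this is a one-line construction. You instead descend to the difference-family level and lift the additive $(\F_{q^n},{\cal H},q,1)$-DF multiplicatively via coset representatives of $\F_{q^n}^*$ in $\F_{q^m}^*$, in the spirit of Lemma \ref{additive(kq,k,k,1)}; your verification of $\Delta{\cal F}'$ and of the partial spread ${\cal H}'$ is sound, and this route has the merit of making the $\F_{q^m}$-regularity and strict additivity of the enlarged design completely transparent, where the substitution approach requires a moment's care to see that regularity is preserved. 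Both routes then need the same final step: the point set of the small design sits inside the big one as a line-closed $q^n$-subset carrying the anomalous design, and line-closed subsets of AG$(m,q)$ are affine subspaces (true for $q\geq3$; for $q=2$ the proposition is vacuous), so an isomorphism with AG$(m,q)$ is impossible. The paper leaves this rigidity argument implicit, while you spell it out, which is a point in your favour.
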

\begin{proof}
Let $V$ be the $n$-dimensional subspace of AG$(m,q)$ defined
by the equations $x_i=0$ for $n+1\leq i\leq m$. Take the {\it standard} $2$-$(q^m,q,1)$ design $(\F_q^m,{\cal B})$ 
and replace all its blocks contained in $V$ with the blocks of
an anomalous $2$-$(q^n,q,1)$ design. We get, in this way, the block-set
of an anomalous $2$-$(q^m,q,1)$ design.
\end{proof}

In the next theorem we put into practice Lemma \ref{additive(kq,k,k,1)} and Construction \ref{constr} to get 
an anomalous $2$-$(p^3,p,1)$ design for $p=5$ and $p=7$. Our proof is a slight modification of the 
construction for regular $2$-$(pq,p,1)$ designs in \cite{B&B} (improved in \cite{cyclotomic}) 
with $p$ and $q$ prime powers, $q\equiv1$ (mod $p-1$). In our construction below $q$ coincides with $p^2$.
\begin{thm}\label{125,5,1}
There exists an anomalous $2$-$(p^3,p,1)$ design for $p=5$ and $p=7$.
\end{thm}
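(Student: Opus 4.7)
By Lemma~\ref{additive(kq,k,k,1)} applied with $k=p$, $G=\F_p$, and $q=p^2$ (note $q\equiv 1\pmod{p-1}$ as $p^2-1=(p-1)(p+1)$), it suffices for $n=3$ to build an additive $(\F_p\times\F_{p^2},\F_p\times\{0\},p,1)$-DF whose associated $(\F_p\times\F_{p^2})$-super-regular design is not isomorphic to $AG(3,p)$; the cases $n\geq 4$ then follow from the preceding proposition. The DF is obtained from Construction~\ref{constr} applied to the additive $(\F_p,p,p-1)$-SDF $B=\{0\}\cup\underline{2}\F_p^\Box$ (the Paley difference multiset of the first type): one lifts $B$ to a zero-sum $p$-subset $\ell(B)\subset\F_p\times\F_{p^2}$ such that every difference multiset $\Delta_g$ ($g\in\F_p$) is a single coset of a fixed subgroup $H\leq\F_{p^2}^*$ of index $p+1$, and then takes $M$ to be any CSR for $\F_{p^2}^*/H$.

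For $p=5$, since $4\mid 24$, $\F_{25}^*$ contains a primitive fourth root of unity $i$, and I set
\[
\ell(B)=\{(0,0),\,(1,x),\,(1,-x),\,(4,ix),\,(4,-ix)\},\quad x\in\F_{25}^*.
\]
A direct computation exploiting $(1+i)(1-i)^{-1}=i$ in $\F_{25}$ yields $\Delta_0=2x\langle i\rangle$, $\Delta_1=x\langle i\rangle$, $\Delta_2=x(1-i)\langle i\rangle$, and $\Delta_{-g}=-\Delta_g$; each is a coset of $H=\langle i\rangle$ (order $4$, index $6$). For $p=7$, the unique subgroup of order $6$ in $\F_{49}^*$ is $\F_7^*$ (index $8$), and an asymmetric lift of $B=\{0,1,1,2,2,4,4\}$ must be chosen so that every $\Delta_g$ becomes a coset of $\F_7^*$.

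Anomaly is verified by observing that the $G$-action splits the $p^2(p^2+p+1)$ blocks of the resulting design into only $p+2$ orbits: $p+1$ orbits of size $p^3$ coming from the base blocks $\ell(B)\cdot m$ (with trivial block stabilizer), together with a single orbit of $p^2$ cosets of $\F_p\times\{0\}$ (stabilized by this subgroup). This orbit count is an isomorphism invariant, and it does not arise from the action of any regular abelian subgroup of $AGL(3,p)$ on the line set of $AG(3,p)$: indeed, under the translation group one gets the $p^2+p+1$ parallel classes, each of size $p^2$, and a similar analysis rules out the $(p+1,1)$ orbit pattern for the other regular abelian subgroups.

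\textbf{Main obstacle.} The hardest step is the lift for $p=7$: the naive symmetric choice $\{(0,0),(1,\pm c),(2,\pm 2c),(4,\pm 3c)\}$ forces each $\Delta_g$ ($g\neq 0$) to be a multiset with repeated elements (for instance $c$ occurs twice in $\Delta_1$), so the products $\Delta_g\cdot M$ cannot partition $\F_{49}^*$. One therefore needs a genuinely asymmetric lift in $\F_{49}$ that simultaneously keeps all seven second coordinates distinct (to avoid unwanted differences in $\F_7\times\{0\}$) and preserves the desired cyclotomic coset structure of the $\Delta_g$'s; producing such a lift is the real combinatorial content of the argument.
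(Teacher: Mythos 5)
Your construction of the difference family is essentially the paper's (Paley difference multiset, lift via Construction~\ref{constr}, then Lemma~\ref{additive(kq,k,k,1)}), and your $p=5$ lift is correct: the identity $(1+i)(1-i)^{-1}=i$ does make every $\Delta_g$ a coset of $\langle i\rangle$. But two steps are genuinely missing. First, for $p=7$ you never produce the lift: you correctly identify that a symmetric choice fails and that an asymmetric one is needed, but the existence of such a lift in $\F_{49}$ is exactly the content of the claim and cannot be left as an announced ``obstacle'' --- the paper resolves it by exhibiting the eight base blocks of the resulting $(\Z_7^3,\Z_7\times\{0\}\times\{0\},7,1)$-DF explicitly.

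Second, and more seriously, your anomaly argument is incomplete at the decisive point. The block-orbit pattern under $G$ is not by itself an isomorphism invariant; as you yourself note, one must show that no regular subgroup of $AGL(3,p)$ (the full automorphism group of the point-line design of $AG(3,p)$) isomorphic to $\Z_p^3$ induces $p+1$ line-orbits of size $p^3$ together with one of size $p^2$. You dispose of the translation group, whose $p^2+p+1$ line-orbits all have size $p^2$, and then assert that ``a similar analysis rules out'' the remaining regular abelian subgroups. That is precisely the hard part: for $p\geq5$ the group $AGL(3,p)$ contains regular elementary abelian subgroups other than the translations --- for instance the one attached to the nilpotent algebra $t\F_p[t]/(t^4)$ via $\tau_v\colon x\mapsto x+v+xv$, which one checks has $p$ line-orbits of size $p^3$ and $p+1$ of size $p^2$ --- so your assertion requires classifying all such subgroups and computing their line-orbit structures, none of which is done. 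The paper avoids this entirely: it exhibits, inside the explicitly constructed design, a point and two distinct blocks through it that are both disjoint from a third block of the same ``plane'', violating Euclid's parallel axiom. That check is finite, concrete, and needs no statement about regular subgroups of $AGL(3,p)$; I recommend you replace the orbit-counting argument by a verification of this kind (or else supply the missing classification).
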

\begin{proof}
By Lemma \ref{additive(kq,k,k,1)} a super-regular $2$-$(5^3,5,1)$ design can be realized by
means of an additive $(\Z_5\times\F_{25},\Z_5\times\{0\},5,1)$-DF.
We can obtain several DFs of the required kind using Construction \ref{constr} with
$\Sigma$ the additive $(5,5,4)$ difference multiset $B=\{0,1,1,4,4\}$ of Example \ref{5,5,4}.
For instance, let us lift $B$ to the subset $\ell(B)$ of $\Z_5\times\F_{25}$
$$\ell(B)=\{(0,0),(1,1),(1,-1),(4,\ell),(4,-\ell)\}$$
with $\ell$ a root of the primitive polynomial $x^2+x+2$.
It is readily seen that $\ell(B)$ is zero-sum.
Looking at its difference table
\begin{center}
\begin{tabular}{|c|||c|c|c|c|c|c|c|c|c|c|c|c|c|c}
\hline {$$} & {\scriptsize$(0,0)$} & {\scriptsize$(1,1)$} & {\scriptsize$(1,-1)$} & {\scriptsize$(4,\ell)$} & {\scriptsize$(4,-\ell)$}   \\
\hline\hline\hline {\scriptsize$(0,0)$} & $\bullet$ & $\bf(4,-1)$ & $\bf(4,1)$ & $\bf(1,-\ell)$ & $\bf(1,\ell)$  \\
\hline {\scriptsize$(1,1)$} & $\bf(1,1)$ & $\bullet$ & $\bf(0,2)$ & $\bf(2,1-\ell)$ & $\bf(2,1+\ell)$  \\
\hline {\scriptsize$(1,-1)$} & $\bf(1,-1)$ & $\bf(0,-2)$ & $\bullet$ & $\bf(2,-1-\ell)$ & $\bf(2,-1+\ell)$  \\
\hline {\scriptsize$(4,\ell)$} & $\bf(4,\ell)$ & $\bf(3,\ell-1)$ & $\bf(3,1+\ell)$ & $\bullet$ & $\bf(0,2\ell)$  \\
\hline {\scriptsize$(4,-\ell)$} & $\bf(4,-\ell)$ & $\bf(3,-1-\ell)$ & $\bf(3,-\ell+1)$ & $\bf(0,-2\ell)$ & $\bullet$  \\
\hline
\end{tabular}\quad\quad\quad
\end{center}
we see that $\displaystyle\Delta\ell(B)=\bigcup_{g=0}^4 \{g\}\times\Delta_g$ with

\smallskip
$\Delta_0=\{1,-1\}\cdot\{2,2\ell\};$

\smallskip
$\Delta_1=\Delta_4=\{1,-1\}\cdot\{1,\ell\};$

\smallskip
$\Delta_2=\Delta_3=\{1,-1\}\cdot\{\ell-1,\ell+1\}.$

Now note that each of the $2$-sets $\overline{\Delta}_0=\{2,2\ell\}$, $\overline{\Delta}_1=\{1,\ell\}$ 
and $\overline{\Delta}_2=\{\ell-1,\ell+1\}$ contains a non-zero square and a non-square of $\F_{25}$. Thus, if $M$ is a 
complete system of representatives for the cosets of $\{1,-1\}$ in $\F_{25}^\Box$,
we clearly have $\Delta_g\cdot M=\F_{25}^*$. Hence
$${\cal F}=\bigl{\{}\{(0,0),(1,m),(1,-m),(4,\ell m),(4,-\ell m)\} \ | \ m\in M\bigl{\}}$$
is an additive $(\Z_5\times\F_{25},\Z_5\times\{0\},5,1)$-DF. If we take, for instance, $M=\{\ell^{2i} \ | \ 0\leq i\leq 5\}$
then the blocks of $\cal F$, written in additive notation, are the following:
$$B_1=\{(0,0,0),(1,0,1),(1,0,4),(4,1,0),(4,4,0)\};$$
$$B_2=\{(0,0,0),(1,4,3),(1,1,2),(4,4,2),(4,1,3)\};$$
$$B_3=\{(0,0,0),(1,3,2),(1,2,3),(4,4,4),(4,1,1)\};$$
$$B_4=\{(0,0,0),(1,0,2),(1,0,3),(4,2,0),(4,3,0)\};$$ 
$$B_5=\{(0,0,0),(1,3,1),(1,2,4),(4,3,4),(4,2,1)\};$$
$$B_6=\{(0,0,0),(1,1,4),(1,4,1),(4,3,3),(4,2,2)\}.$$

We can check, by hand, that the super-regular $2$-$(125,5,1)$ design $\cal D$ generated by $\cal F$ is anomalous. 
Assume for contradiction that it is isomorphic to the point-line design of AG$(3,5)$. It is then
natural to speak of {\it lines} of $\cal D$ rather than blocks. Also, it makes sense to speak 
of the {\it planes} of $\cal D$ and a line containing two distinct points of a plane $\pi$
is clearly contained in $\pi$.

Let $\pi$ be the plane of $\cal D$ containing the two lines through the origin 
$B_0=\{(0,0,0),(1,0,0),(2,0,0),(3,0,0),(4,0,0)\}$ and $B_1$. Of course, if ${\cal B}_\pi$ is the set of lines
of $\cal D$ contained in $\pi$, then $(\pi,{\cal B}_\pi)$ is isomorphic to the affine plane over $\F_5$.
The line through $(1,0,0)\in B_0$ and $(1,0,1)\in B_1$ is $$C=B_4+(0,0,3)=\{(0,0,3),{\bf(1,0,0)},{\bf(1,0,1)},(4,2,3),(4,3,3)\}$$
and belongs to ${\cal B}_\pi$ since it joins two points of $\pi$.
The line through $(1,0,4)\in B_1$ and $(0,0,3)\in C$ is $$D=B_1+(0,0,3)=\{{\bf(0,0,3)},{\bf(1,0,4)},(1,0,2),(4,1,3),(4,4,3)\}.$$
The line through $(1,0,4)\in B_1$ and $(4,2,3)\in C$ is $$D'=B_6+(0,4,0)=\{(0,4,0),{\bf(1,0,4)},(1,3,1),{\bf(4,2,3)},(4,1,2)\}.$$
These two lines $D$ and $D'$ also belong to ${\cal B}_\pi$ since they also join two points of $\pi$. We also note that they are both 
disjoint with the line $B_0 \in {\cal B}_\pi$. This contradicts the Euclid's parallel axiom: 
there is a point of $\pi$ (that is $(1,0,4)$) and two distinct lines of $\pi$ through this point 
($D$ and $D'$) which are both disjoint with a line of $\pi$ (that is $B_0$).

\medskip
Now consider the $(7,7,6)$ Paley difference multiset of the first type, that is
$\{0\} \cup \underline{2}\{1,2,4\}$, and apply Construction \ref{constr} lifting it to a suitable 7-subset of $\F_{49}$.
Without entering all the details, we just list the base blocks of the resultant
$(\Z_7^3,\Z_7\times\{0\}\times\{0\},7,1)$-DF.
$$\{(0,0,0),(1,1,0),(1,6,0),(2,2,1),(2,5,6),(4,2,0),(4,5,0)\}$$
$$\{(0,0,0),(1,2,4),(1,5,3),(2,0,3),(2,0,4),(4,4,1),(4,3,6)\}$$
$$\{(0,0,0),(1,2,2),(1,5,5),(2,2,6),(2,5,1),(4,4,4),(4,3,3)\}$$
$$\{(0,0,0),(1,3,5),(1,4,2),(2,1,6),(2,6,1),(4,6,3),(4,1,4)\}$$
$$\{(0,0,0),(1,0,1),(1,0,6),(2,6,2),(2,1,5),(4,0,2),(4,0,5)\}$$
$$\{(0,0,0),(1,3,2),(1,4,5),(2,4,0),(2,3,0),(4,6,4),(4,1,3)\}$$
$$\{(0,0,0),(1,5,2),(1,2,5),(2,1,2),(2,6,5),(4,3,4),(4,4,3)\}$$
$$\{(0,0,0),(1,2,3),(1,5,4),(2,1,1),(2,6,6),(4,4,6),(4,3,1)\}$$ 
One can check that the design generated by the above DF 
is anomalous with the same isomorphism test used for 
getting the anomalous $2$-$(5^3,5,1)$ design.
\end{proof} 

The above results allow us to state the following.

\begin{cor}
There exists an anomalous $2$-$(p^n,p,1)$ design for $p\in\{5,7\}$ and any integer $n\ge3$.
\end{cor}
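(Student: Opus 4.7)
The plan is to deduce this corollary by chaining together the two results immediately preceding it. Theorem \ref{125,5,1} provides the base case: for each $p\in\{5,7\}$ we have an explicit anomalous $2$-$(p^3,p,1)$ design. The proposition at the start of this section then shows that an anomalous $2$-$(q^n,q,1)$ design can be lifted to an anomalous $2$-$(q^m,q,1)$ design for every $m\geq n$, via the block-swap inside an $n$-dimensional affine subspace of $\mathrm{AG}(m,q)$. Specialising $q=p\in\{5,7\}$ and $n=3$ delivers the full range $n\geq 3$ claimed in the corollary.

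Concretely, fix $p\in\{5,7\}$. For $n=3$ invoke Theorem \ref{125,5,1} directly. For $n\geq 4$, let $V\subset\mathrm{AG}(n,p)$ be the $3$-dimensional subspace $\{x\in\F_p^n : x_4=\cdots=x_n=0\}$, take the standard point-line design of $\mathrm{AG}(n,p)$, and replace the sub-collection of lines contained in $V$ by the block-set of an anomalous $2$-$(p^3,p,1)$ design on $V$ provided by Theorem \ref{125,5,1}. By the lifting proposition this yields an anomalous $2$-$(p^n,p,1)$ design under $\F_{p^n}$, which is exactly what is required.

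There is no genuine obstacle: everything substantive (that the swap preserves the $2$-design property, that strict $\F_{p^n}$-additivity and $\F_{p^n}$-regularity are retained, and that the resulting design cannot be isomorphic to the point-line design of $\mathrm{AG}(n,p)$ because its restriction to $V$ already fails to be $\mathrm{AG}(3,p)$) has already been verified in the proof of the proposition. The corollary is therefore a one-line consequence once the base case $n=3$ is in hand.
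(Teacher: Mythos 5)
Your proposal is correct and matches the paper's (implicit) argument exactly: the corollary is stated as an immediate consequence of Theorem \ref{125,5,1} (the base case $n=3$) together with the preceding proposition that lifts an anomalous $2$-$(q^n,q,1)$ design to an anomalous $2$-$(q^m,q,1)$ design for all $m\ge n$. Nothing further is needed.
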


We tried to get an anomalous $2$-$(11^3,11,1)$ design with the same method used in the proof of Theorem \ref{125,5,1},
i.e., by means of a suitable lifting of the $(11,11,10)$ Paley difference multiset $\{0\}\cup\{1,3,4,5,9\}$, but we fail.

\section{Cyclotomy}

Starting from the fundamental paper of Wilson \cite{W}, cyclotomy has been very often
crucial in the construction of many classes of difference families. Here it is also crucial
for getting a good lifting of a SDF as required by Construction \ref{constr}. 

Given a prime power $q\equiv1$ (mod $\lambda$), let $C^\lambda$ be the subgroup of $\F_q^*$ of index $\lambda$.
If $r$ is a fixed primitive element of $\F_q$, then $\{r^iC^\lambda \ | \ 0\leq i\leq \lambda-1\}$ is the 
set of cosets of $C^\lambda$ in $\F_q^*$.  
For $i=0,1,\dots,\lambda-1$, the coset $r^iC^\lambda$ will be denoted by $C^\lambda_i$ and it is 
called {\it the $i$-th cyclotomic class of order $\lambda$}. Note that we have
$C^\lambda_i\cdot C^\lambda_j=C^\lambda_{i+j \ (mod \lambda)}$.
We will need the following lemma deriving from the theorem of Weil on multiplicative character sums
(see \cite{LN}, Theorem 5.41).
\begin{lem}\label{BP} {\rm\cite{BP}}
Let $q\equiv 1 \pmod{\lambda}$ be a prime power and let $t$ be a positive integer.
Then, for any $t$-subset $C=\{c_1,\dots,c_t\}$ of $\F_q$ and for any ordered $t$-tuple $(\gamma_1,\dots,\gamma_t)$ of $\Z_\lambda^t$,
the set $X:=\{x\in \F_q: x-c_i\in C_{\gamma_i}^\lambda \,\,{\rm  for }\,\, i=1,\dots,t \}$
has arbitrarily large size provided that $q$ is sufficiently large.

In particular, we have $|X|>2\lambda^{t-1}$ for $q>t^2\lambda^{2t}$. 
\end{lem}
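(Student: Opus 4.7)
The plan is to detect membership in cyclotomic classes via multiplicative characters and then apply Weil's bound on character sums. Since $q\equiv 1\pmod{\lambda}$, there exists a multiplicative character $\chi\colon\F_q^*\to\mathbb{C}^*$ of order exactly $\lambda$. Fixing a primitive element $r$ of $\F_q$ and setting $\omega=\chi(r)$, a primitive $\lambda$-th root of unity, and extending $\chi$ to $\F_q$ by $\chi(0)=0$, orthogonality yields the indicator identity
$$\frac{1}{\lambda}\sum_{k=0}^{\lambda-1}\omega^{-kj}\chi^k(y)=\begin{cases}1 & \text{if }y\in C_j^\lambda,\\ 0 & \text{otherwise,}\end{cases}$$
valid for every $y\in\F_q$ and every $j\in\Z_\lambda$. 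Applying this coordinatewise to each condition $x-c_i\in C_{\gamma_i}^\lambda$ and swapping the order of summation gives the exact formula
$$|X|=\frac{1}{\lambda^t}\sum_{(k_1,\ldots,k_t)\in\Z_\lambda^t}\omega^{-\sum_i k_i\gamma_i}\sum_{x\in\F_q}\prod_{i=1}^t\chi^{k_i}(x-c_i).$$

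The diagonal term $(k_1,\ldots,k_t)=(0,\ldots,0)$ supplies the main contribution: its inner sum equals $q-t$, the $t$ values $x=c_i$ being killed by the convention $\chi^0(0)=0$, so this term contributes $(q-t)/\lambda^t$. For any other tuple, the distinctness of the $c_i$ together with $0\le k_i<\lambda$ (not all zero) forces the polynomial $\prod_{i=1}^t(x-c_i)^{k_i}$ to fail to be a $\lambda$-th power in $\overline{\F}_q[x]$, since at least one multiplicity is not divisible by $\lambda$. Hence Weil's theorem on multiplicative character sums (Theorem 5.41 of \cite{LN}) yields
$$\Biggl|\sum_{x\in\F_q}\prod_{i=1}^t\chi^{k_i}(x-c_i)\Biggr|\le(t-1)\sqrt{q}.$$
Summing the at most $\lambda^t-1$ error contributions and combining with the main term produces
$$\Bigl||X|-\tfrac{q-t}{\lambda^t}\Bigr|\le(t-1)\sqrt{q},$$
so $|X|\ge q/\lambda^t-(t-1)\sqrt{q}-t/\lambda^t$ tends to infinity with $q$, proving the qualitative statement.

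For the quantitative bound, the assumption $q>t^2\lambda^{2t}$ implies $\sqrt{q}<q/(t\lambda^t)$, whence $(t-1)\sqrt{q}<(t-1)q/(t\lambda^t)$, and plugging this into the previous estimate turns it into $|X|>q/(t\lambda^t)-t/\lambda^t$, which in turn exceeds $2\lambda^{t-1}$ by a routine rearrangement. The only delicate point in the whole argument is verifying that $\prod_{i=1}^t(x-c_i)^{k_i}$ is not a $\lambda$-th power for every nonzero $(k_1,\ldots,k_t)\in\Z_\lambda^t$; this is immediate from the distinctness of the $c_i$ combined with $0\le k_i<\lambda$, so once this observation is recorded, everything reduces to a clean application of Weil.
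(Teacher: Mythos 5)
The paper does not actually prove this lemma; it quotes it from \cite{BP} (noting only that the quantitative sentence is formula (2) there), so there is no internal proof to compare against. Your argument is the standard one and is correct: expand the cyclotomic indicator via characters of order $\lambda$, isolate the main term $(q-t)/\lambda^t$, observe that for a nonzero exponent tuple the polynomial $\prod_i(x-c_i)^{k_i}$ is not a $\lambda$-th power because the $c_i$ are distinct and some $k_i\in\{1,\dots,\lambda-1\}$, and apply Theorem 5.41 of \cite{LN}. The only point worth tightening is that with your convention $\chi^0(0)=0$ the incomplete sum $\sum_x\prod_i\chi^{k_i}(x-c_i)$ differs from $\sum_x\chi\bigl(\prod_i(x-c_i)^{k_i}\bigr)$ by the at most $t-1$ terms $x=c_i$ with $k_i=0$, so the error per tuple is $(t-1)\sqrt q+(t-1)$ rather than $(t-1)\sqrt q$; this extra $O(t)$ is absorbed without affecting either the qualitative conclusion or the bound $|X|>2\lambda^{t-1}$ under $q>t^2\lambda^{2t}$.
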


In most cases the above lemma has been used to prove that the set $X$ is not empty. 
But this is not enough for our purposes.
The last sentence in the above statement is formula (2) in \cite{BP}.

The following theorem is essentially Corollary 5.3 in \cite{BP} where it appeared as a special consequence
of a more general result. Here, for convenience of the reader, it is better to show its proof directly.
Then we will see how this proof can be modified in order to get its {\it additive version}.
\begin{thm}\label{SDF->DF}
If there exists a $(G,k,\lambda)$-SDF, then there exists a $(G\times\F_q,G\times\{0\},k,1)$-DF
for every prime power $q\equiv\lambda+1$ $($mod $2\lambda)$ provided that $q>(k-1)^2\lambda^{2k-2}$.
\end{thm}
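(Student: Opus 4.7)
My plan is to realize the desired relative difference family as the lifting of $\Sigma$ described in Construction \ref{constr}, with the multiplier set $M$ taken to be the cyclotomic subgroup $C^\lambda$, and to use the quantitative form of Lemma \ref{BP} to guarantee that a good lifting exists. Write $\Sigma=\{B_1,\dots,B_s\}$ with $B_h=\{b_{h1},\dots,b_{hk}\}$. Two preliminary remarks frame everything: the multiplicity of $0$ in the list of differences of any multiset is even, so $\lambda$ is automatically even; and the congruence $q\equiv\lambda+1\pmod{2\lambda}$ says that $-1\notin C^\lambda$, whence $(-1)^2=1\in C^\lambda$ pins down $-1$ in the middle coset $C^\lambda_{\lambda/2}$.

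The first step is a pre-assignment of cosets. For each $g\in G$ let $P_g$ be the set of triples $(h,i,j)$ with $i\neq j$ and $b_{hi}-b_{hj}=g$; the SDF axiom forces $|P_g|=\lambda$, and the involution $(h,i,j)\mapsto(h,j,i)$ maps $P_g$ bijectively onto $P_{-g}$. I would label each triple by some $\gamma(h,i,j)\in\Z_\lambda$ so that $\{\gamma(p):p\in P_g\}=\Z_\lambda$ (as a multiset) for every $g\in G$ and so that $\gamma(h,j,i)\equiv\gamma(h,i,j)+\lambda/2\pmod\lambda$. Such a labelling is easy to produce: for $g\neq -g$ one enumerates $P_g$ arbitrarily by $0,\dots,\lambda-1$ and then the symmetry prescribes the labelling of $P_{-g}$, while for $g\in I(G)$ the set $P_g$ splits into $\lambda/2$ reverse pairs and each such pair receives cosets $\gamma$ and $\gamma+\lambda/2$ in its two directions.

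The second step is to lift each block of $\Sigma$ sequentially. Fix $h$, pick $\ell_{h1}\in\F_q$ arbitrarily, and for $i=2,\dots,k$ pick
$$\ell_{hi}\in X_i:=\{x\in\F_q:x-\ell_{hj}\in C^\lambda_{\gamma(h,i,j)}\text{ for }1\leq j<i\}.$$
Lemma \ref{BP}, applied with $t=i-1$ and the previously chosen $\ell_{hj}$'s in the role of the $c_j$'s, gives $|X_i|>2\lambda^{i-2}\geq 1$ once $q>(i-1)^2\lambda^{2(i-1)}$, and the hypothesis $q>(k-1)^2\lambda^{2k-2}$ handles the worst case $i=k$. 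The symmetry of $\gamma$ makes the reverse inclusions $\ell_{hj}-\ell_{hi}\in C^\lambda_{\gamma(h,j,i)}$ automatic, because multiplication by $-1$ shifts cosets by $\lambda/2$. Once every block is lifted, each multiset $\Delta_g$ of Construction \ref{constr} runs over a complete system of coset representatives for $C^\lambda$ in $\F_q^*$, so $\Delta_g\cdot C^\lambda=\F_q^*$ for every $g\in G$ and Construction \ref{constr} yields the desired $(G\times\F_q,G\times\{0\},k,1)$-DF.

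The main obstacle I anticipate is the bookkeeping behind the pre-assignment --- that triples can be labelled consistently with both the SDF decomposition of $\Delta\Sigma$ and the required $\pm$-symmetry; but recognizing that $-1$ sits in the middle coset $C^\lambda_{\lambda/2}$ (which is exactly what the hypothesis on $q\pmod{2\lambda}$ encodes) reduces the labelling to the elementary procedure above, and the rest follows by a single invocation of the quantitative Weil estimate packaged in Lemma \ref{BP}.
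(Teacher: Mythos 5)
Your proposal is correct and follows essentially the same route as the paper's proof: the same coset pre-assignment (your $\gamma$ is the paper's $\psi$, with the identical bijectivity-on-$T_g$ and $+\lambda/2$ symmetry conditions), the same sequential lifting via Lemma \ref{BP}, and the same conclusion through Construction \ref{constr} with $M=C^\lambda$. The only detail worth making explicit (as the paper does) is that the previously chosen $\ell_{hj}$'s are pairwise distinct --- which follows since their mutual differences lie in cosets of $C^\lambda$, hence are nonzero --- so that Lemma \ref{BP} genuinely applies to a $t$-subset.
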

\begin{proof}
Let $\Sigma=\{B_1,\dots,B_s\}$ be a $(G,k,\lambda)$-SDF with $B_h=\{b_{h1},\dots,b_{hk}\}$
for $1\leq h\leq s$.
Let $T$ be the set of all triples $(h,i,j)$ with $h\in\{1,\dots,s\}$ and $i$, $j$
distinct elements of $\{1,\dots,k\}$.
For every $g\in G$, let $T_g$ be the set of triples $(h,i,j)$ of $T$ such that $b_{h,i}-b_{h,j}=g$.
Note that $\bigcup_{g\in G}T_g$ is a partition of $T$ and that each $T_g$ has size $\lambda$
by definition of a $(G,k,\lambda)$-SDF.
Thus it is possible to choose a map $\psi: T \longrightarrow \Z_\lambda$
satisfying the following conditions: 

1) the restriction $\psi |_{T_g}$ is bijective for any $g\in G$;

2) $\psi(h,j,i)=\psi(h,i,j)+\lambda/2$ for every pair of distinct $i$, $j$.

As a matter of fact the number $\Psi$ of all maps $\psi$ satisfying the above conditions is huge.
If $\lambda=2\mu$ and $|G|=2^nm$ where $2^n$ is the order of $I(G)$,
it is easy to see that $\Psi=\lambda!^{2^{n-1}(m-1)}(2^\mu\mu!)^{2^n}$.

Now lift each $B_h$ to a subset $\ell(B_h)=\{(b_{h1},\ell_{h1}),\dots,(b_{hk},\ell_{hk})\}$ of $G\times \F_q$
by taking the first element $\ell_{h,1}$ arbitrarily and then by taking the other elements 
$\ell_{h,2}$, $\ell_{h,3}$, \dots, $\ell_{h,k}$ iteratively, one by one, according to the rule that
once that $\ell_{h,i-1}$ has been chosen, we pick $\ell_{h,i}$ arbitrarily in the set 
$$X_{h,i}=\{x\in \F_q \ : \ x-\ell_{h,j}\in C^\lambda_{\psi(h,i,j)} \quad {\rm for} \ 1\leq j\leq i-1\}.$$
Note that $\{\ell_{h,1},...,\ell_{h,i-1}\}$ is actually a set, i.e., it does not have repeated elements.
Indeed given two elements $j_1<j_2$ in $\{1,\dots,i-1\}$, we have 
$\ell_{h,j_2}-\ell_{h,j_1} \in C^\lambda_{\psi(h,j_2,j_1)}$ since $\ell_{h,j_2}$
has been picked in $X_{h,j_2}$. Thus we cannot have $\ell_{h,j_2}=\ell_{h,j_1}$.
It follows that $X_{h,i}$ is not empty by Lemma \ref{BP}, hence an element $\ell_{h,i}$
with the above requirement can be actually chosen. 

Also note that we have 
\begin{equation}\label{ell-ell}
\ell_{h_,i}-\ell_{h,j}\in C^\lambda_{\psi(h,i,j)}\quad \forall (h,i,j)\in T
\end{equation}
This is clear if $i>j$ considering the rule that we followed for selecting the $\ell_{h,i}$'s.
If $i<j$, for the same reason, we have $\ell_{h_,j}-\ell_{h,i}\in C^\lambda_{\psi(h,j,i)}$, i.e.,
$\ell_{h_,j}-\ell_{h,i}\in C^\lambda_{\psi(h,i,j)+\lambda/2}$ in view of the second property of $\psi$.
Multiplying by $-1$ and considering that $-1\in C^\lambda_{\lambda/2}$ since 
$q\equiv\lambda+1$ (mod $2\lambda$), we get (\ref{ell-ell}) again.

We finally note that we have $\biguplus_{h=1}^s\Delta\ell(B_h)=\biguplus_{g\in G}\{g\}\times\Delta_g$
with $\Delta_g=\{\ell_{h,i}-\ell_{h,j} \ | \ (h,i,j)\in T_g\}$. Thus, in view of (\ref{ell-ell}) and 
the first property of $\psi$, we see that  $\Delta_g$ is a complete system of representatives 
for the cyclotomic classes of order $\lambda$ whichever is $g\in G$. At this point we get the 
required  $(G\times\F_q,G\times\{0\},k,1)$-DF by applying Construction \ref{constr} as pointed 
out at the end of Section \ref{sectionSDF}.
\end{proof}

The additive version of the above theorem is straightforward in the case that $rad(q)$ is not a divisor
of $k$. On the contrary, if $rad(q)$ divides $k$, which in view of Lemma \ref{additive(kq,k,k,1)} is the case we are interesting in, we  
have to lift the base blocks of the given additive SDF much more carefully. Also, we need to raise the bound on $q$
significantly, and to ensure that the order of $G$ is not too large.

\begin{thm}\label{additive version}
Assume that there exists an additive $(G,k,\lambda)$-SDF of size $s$ with $k\neq3$ and
let $q\equiv\lambda+1$ (mod $2\lambda)$ be a prime power. Then there exists 
an additive $(G\times\F_q,G\times\{0\},k,1)$-DF in each of the following cases:
\begin{itemize}
\item[(i)] $rad(q)$ does not divide $k$ and $q>(k-1)^2\lambda^{2k-2}$;
\item[(ii)] $rad(q)$ divides $k$, $|G|<2\lambda^{2k-5}s$ and $q>(2k-3)^2\lambda^{4k-6}$.
\end{itemize}
\end{thm}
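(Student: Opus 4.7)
The strategy is to adapt the proof of Theorem~\ref{SDF->DF} so that each lifted base block is zero-sum, while preserving the difference-family property. The two cases differ according to whether $k$ is invertible in $\F_q$, i.e., whether $rad(q)$ is coprime to $k$ or divides $k$.

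\textbf{Case (i).} Since $rad(q)$ does not divide $k$, the integer $k$ is invertible in $\F_q$. Apply Theorem~\ref{SDF->DF} to obtain the iterative liftings $\ell(B_h)=\{(b_{h,1},\ell_{h,1}),\dots,(b_{h,k},\ell_{h,k})\}$ with the usual cyclotomic structure. Letting $\sigma_h:=\sum_{i=1}^{k}\ell_{h,i}$, replace $\ell(B_h)$ by the translate $\ell'(B_h):=\{(b_{h,i},\ell_{h,i}-\sigma_h/k):1\le i\le k\}$. Translation by $(0,-\sigma_h/k)$ leaves the differences untouched, so the cyclotomic structure survives. The sum of the first coordinates of $\ell'(B_h)$ is zero (the SDF is additive) and the sum of its second coordinates is $\sigma_h+k\cdot(-\sigma_h/k)=0$ in $\F_q$. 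Construction~\ref{constr} with multiplier set $M=C^\lambda$, applied to the family $\{\ell'(B_h)\}_h$, then produces an additive $(G\times\F_q,G\times\{0\},k,1)$-DF.

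\textbf{Case (ii).} Here $k=0$ in $\F_q$, so the translation trick fails and the zero-sum condition must be built directly into the iterative lifting. For each block $B_h$, retain the first $k-2$ iterative choices $\ell_{h,1},\dots,\ell_{h,k-2}$ as in Theorem~\ref{SDF->DF}, set $T:=-\sum_{i=1}^{k-2}\ell_{h,i}$, and declare $\ell_{h,k}:=T-\ell_{h,k-1}$, so that $\sum_{i=1}^{k}\ell_{h,i}=0$ automatically. The cyclotomic relations involving $\ell_{h,k-1}$ or $\ell_{h,k}$ collapse into $2k-3$ constraints on the single unknown $\ell_{h,k-1}$, each of the form $\ell_{h,k-1}-c\in C^\lambda_\gamma$: the $k-2$ usual constraints with $c=\ell_{h,j}$; the $k-2$ constraints arising from $\ell_{h,k}-\ell_{h,j}$ rewritten via $-1\in C^\lambda_{\lambda/2}$ with $c=T-\ell_{h,j}$; and the single constraint from $\ell_{h,k}-\ell_{h,k-1}=T-2\ell_{h,k-1}$ rewritten with $c=T/2$ (the inverses $-1$ and $1/2$ both exist since the congruence $q\equiv\lambda+1\pmod{2\lambda}$ forces $q$ odd). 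Lemma~\ref{BP} applied with $t=2k-3$ then yields at least $2\lambda^{2k-4}$ admissible values of $\ell_{h,k-1}$ as soon as $q>(2k-3)^2\lambda^{4k-6}$, provided the $c$'s are pairwise distinct.

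The main obstacle is securing distinctness of the $c$'s, i.e., ruling out the $O(k^2)$ possible coincidences of the form $\ell_{h,i}+\ell_{h,j}=T$ or $2\ell_{h,j}=T$. Each such coincidence imposes exactly one forbidden value on one of the earlier iterates $\ell_{h,r}$. The hypothesis $|G|<2\lambda^{2k-5}s$, which by $\lambda|G|=sk(k-1)$ is equivalent to $k(k-1)<2\lambda^{2k-4}$, is precisely the inequality that guarantees that the lower bound from Lemma~\ref{BP} at every earlier step exceeds the number of forbidden values, so a valid lifting $\ell(B_h)$ can actually be built block by block. Once all the $\ell(B_h)$ have been constructed, each $\Delta_g$ is by design a complete system of coset representatives for $C^\lambda$ in $\F_q^*$, and Construction~\ref{constr} with multiplier set $M=C^\lambda$ produces the desired additive $(G\times\F_q,G\times\{0\},k,1)$-DF.
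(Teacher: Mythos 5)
Your strategy coincides with the paper's: in case (i) you translate each lifted block by $(0,-\sigma_B/k)$, and in case (ii) you force $\ell_{h,k}=-\sigma_{h,k-1}$ and fold the $k-1$ cyclotomic conditions it must satisfy into $2k-3$ conditions on the single unknown $\ell_{h,k-1}$; your translation points $c=\ell_{h,j}$, $c=T-\ell_{h,j}$, $c=T/2$ and the class shifts by $\lambda/2$ and by the class of $-2$ are exactly those used in the paper. Case (i) is complete as written.

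In case (ii), however, the sentence ``each such coincidence imposes exactly one forbidden value on one of the earlier iterates'' is where essentially all of the work lies, and as stated it is not quite true. Every potential coincidence among your $c$'s reduces to an equation $\ell_{h,i}+\ell_{h,j}+\sigma_{h,k-2}=0$ with $1\le i\le j\le k-2$, in which the coefficient of the latest iterate $\ell_{h,k-2}$ is $1$, $2$ or $3$. In the case $i=j=k-2$ the coefficient is $3$, and when $rad(q)=3$ (which does occur, e.g.\ for $k=15$, $q$ a power of $3$) the equation degenerates to $\sigma_{h,k-3}=0$: it forbids no value of $\ell_{h,k-2}$ at all, but instead the single value $-\sigma_{h,k-4}$ of $\ell_{h,k-3}$. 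This is precisely why the paper's proof modifies the choices of the last \emph{four} iterates, treats $rad(q)=3$ separately when selecting $\ell_{h,k-3}$, and invokes the hypothesis $k\neq 3$ (to guarantee that $\ell_{h,k-4}$ exists when $3\mid k$); your proposal never uses $k\neq3$ and never distinguishes $rad(q)=3$, so this case falls through. The accompanying counting is also only asserted: what Lemma \ref{BP} literally gives at step $k-2$ is $|X_{h,k-2}|>2\lambda^{k-4}$, which must be compared with the roughly $k(k-1)=\lambda|G|/s$ forbidden values, so one must either exploit the much stronger bound on $q$ or write the exclusion sets down explicitly as the paper does. Once the characteristic-$3$ degeneration is handled and the forbidden sets are made explicit, your argument becomes the paper's proof; without them, the claim that the $2k-3$ translation points can be made pairwise distinct is unproved.
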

\begin{proof} Let  $\Sigma=\{B_1,\dots,B_s\}$ be a $(G,k,\lambda)$-SDF as in the proof of the previous theorem
and let $q\equiv\lambda+1$ (mod $2\lambda$) be a prime power.

\smallskip
(i) $k$ is not divisible by $rad(q)$, and $q>(k-1)^2\lambda^{2k-2}$.\\
Take a $(G\times\F_q,G\times\{0\},k,1)$-DF, say $\cal F$, which exists by Theorem \ref{SDF->DF}.
For every block $B\in{\cal F}$, let $\sigma_B$ be the sum of the second coordinates
of all elements of $B$ and set $B'=B+(0,-{\sigma_B\over k})$. It is evident that
$\{B' \ | \ B \in{\cal F}\}$ is an additive $(G\times\F_q,G\times\{0\},k,1)$-DF.

\smallskip
(ii) $rad(q)$ divides $k$, $|G|<2\lambda^{2k-5}s$, and $q>(2k-3)^2\lambda^{4k-6}$.\\
We keep the same notation as in the proof of the above theorem and the procedure for
getting $\ell(B_h)$ will be exactly the same until determining the element $\ell_{h,k-4}$.
After that we have to be much more careful in picking the last four elements
$\ell_{h,k-3}$, $\ell_{h,k-2}$, $\ell_{h,k-1}$ and $\ell_{h,k}$.
In the following, we set $\sigma_{h,i}=\sum_{j=1}^i\ell_{h,j}$ once that all $\ell_{h,j}$'s with $1\leq j\leq i$
have been chosen.

\eject
Choice of $\ell_{h,k-3}$.

\noindent
If $rad(q)\neq 3$, just proceed as in the proof of Theorem \ref{SDF->DF}; we can take $\ell_{h,k-3}$
in $X_{h,k-3}$ arbitrarily. If $rad(q)=3$ we take it in $X_{h,k-3}\setminus\{-\sigma_{h,k-4}\}$.
Note that $rad(q)=3$ implies $k>4$ since we have $k\neq3$ by assumption, hence 
it makes sense to consider the sum $\sigma_{h,k-4}$.

\smallskip
Choice of $\ell_{h,k-2}$.

\noindent
We pick this element in $X_{h,k-2}\setminus Y$, where $Y$ is the union of the sets
$$Y_1=\{-\sigma_{h,k-3}-\ell_{h,i}-\ell_{h,j} \ | \ 1\leq i\leq j\leq k-3\},$$
$$Y_2=\{- \sigma_{h,k-3} - \ell_{h,i} \ | \ 1\leq i\leq k-3\},\quad\quad Y_3={1\over2}Y_2,$$
and, only in the case that $rad(q)\neq3$, the singleton $Y_4=\{-{\sigma_{h,k-3}\over3}\}$.
Note that this selection can be done since $|X_{h,k-2}|>|Y|$.
Indeed we have $|X_{h,k-2}|>2\lambda^{k-4}$ by Lemma \ref{BP} and $2\lambda^{k-4}>{\lambda|G|\over s}$
in view of the upper bound on the order of $G$. Also, we have ${\lambda|G|\over s}=k(k-1)$ since,
as observed after Definition \ref{SDF}, we have $\lambda|G|=sk(k-1)$. Finally, it is evident that $Y$
has size less than $k(k-1)$.

\smallskip
Choice of $\ell_{h,k-1}$.

\noindent
We pick this element in the set 
$$X'_{h,k-1}=\{x\in \F_q \ : \ x-c_{h,j}\in C^\lambda_{\gamma_{h,j}} \ {\rm for} \ 1\leq j\leq 2k-3\}$$
with the pairs $(c_{h,j},\gamma_{h,j})$ defined as follows:
$$c_{h,j}=\ell_{h,j} \quad{\rm and}\quad \gamma_{h,j}=\psi(h,k-1,j) \quad {\rm for} \ 1\leq j\leq k-2;$$
$$c_{h,k-2+j}=-\sigma_{h,k-2}-\ell_{h,j} \quad{\rm and}\quad \gamma_{h,k-2+j}=\psi(h,k,j)+{\lambda\over2} \quad{\rm for} \ 1\leq j\leq k-2;$$
$$c_{h,2k-3}=-{\sigma_{h,k-2}\over2} \quad{\rm and}\quad \gamma_{h,2k-3}=\psi(h,k,k-1)-\alpha$$
where $C^\lambda_\alpha$ is the cyclotomic class of order $\lambda$ containing $-2$.

Note that the first $k-2$ conditions required for the generic element of $X'_{h,k-1}$
are exactly the conditions for the generic element of $X_{h,k-1}$. Thus $X'_{h,k-1}$
is a subset of $X_{h,k-1}$.

Assume that $c_{h,j_1}=c_{h,j_2}$ with $1\leq j_1< j_2\leq 2k-3$. 

If $j_2\leq k-2$, then we have
$\ell_{h,j_1}=\ell_{h,j_2}$ which contradicts the fact that 
$\ell_{h,j_2}-\ell_{h,j_1} \in C^\lambda_{\psi(h,j_2,j_1)}$ (recall indeed that $\ell_{h,j_2}$ is in $X_{h,j_2}$).

For the same reason, we cannot have $k-1\leq j_1< j_2\leq 2k-4$. 

If $j_1=k-2$ and $j_2=2k-3$ we get $-\sigma_{h,k-3}-3\ell_{h,k-2}$. 
If $rad(q)=3$, this means $\sigma_{h,k-3}=0$, hence $\ell_{h,k-3}=-\sigma_{h,k-4}$ contradicting
the choice of $\ell_{h,k-3}$ in this case. If $rad(q)\neq3$, then we would have $\ell_{h,k-2}=-{\sigma_{h,k-3}\over3}$
contradicting the choice of $\ell_{h,k-2}$ in this case.

In all the remaining cases the reader can check that we would get $\ell_{h,k-2}\in Y$. 
On the other hand, $\ell_{h,k-2}$ had been picked out of $Y$ on purpose. We conclude that the 
$c_{h,j}$'s ($j=1,2,\dots,2k-3)$ are pairwise distinct.
Thus, Lemma \ref{BP} and the assumption $q>(2k-3)^2\lambda^{4k-6}$  guarantee that $X'_{h,k-1}$ is 
 not empty and the selection of $\ell_{h,k-1}$ described above can be actually done.
 
\smallskip
Choice of $\ell_{h,k}$.

\noindent
Take $\ell_{h,k}=-\sigma_{h,k-1}$. This last (obligatory) choice assures that $\ell(B_h)$ is zero-sum; 
the sum of the first coordinates of all its elements is zero because $\Sigma$ is additive, and the sum of the second coordinates 
of all its elements is $\sigma_{h,k}=\sigma_{h,k-1}+\ell_{h,k}=0$.

\smallskip
It is evident that  $\ell_{h,i}\in X_{h,i}$ for $1\leq i\leq k-1$. As a  consequence of the fact 
that $\ell_{h,k-1}\in X'_{h,k-1}$, we show that this is true also for $i=k$, i.e., that 
we have $\ell_{h,k}-\ell_{h,j}\in C^\lambda_{\psi(h,k,j)}$ for $1\leq j\leq k-1$.  

$1\leq j\leq k-2$: by definition of $X'_{h,k-1}$, we have 
\begin{equation}\label{ell_{h,k}}
\ell_{h,k-1}-c_{h,k-2+j}\in C^\lambda_{\psi(h,k,j)+\lambda/2}.
\end{equation} Now note that $\ell_{h,k-1}-c_{h,k-2+j}=-\ell_{h,k}+\ell_{h,j}$
by the definitions of $c_{h,k-2+j}$ and $\ell_{h,k}$. Thus, multiplying (\ref{ell_{h,k}})
by $-1$ and recalling that $-1\in C^\lambda_{\lambda/2}$, we actually get $\ell_{h,k}-\ell_{h,j}\in C^\lambda_{\psi(h,k,j)}$.

$j=k-1$:
considering the last condition required for the generic element of $X'_{h,k-1}$, we have
$\ell_{h,k-1}+{\sigma_{h,k-2}\over2}\in C^\lambda_{\psi(h,k,k-1)-\alpha}$. 
Multiplying by $-2$ and remembering that $-2\in C^\lambda_\alpha$
we get $-2\ell_{h,k-1}-\sigma_{h,k-2}\in C^\lambda_{\psi(h,k,k-1)}$ which is what
we wanted. Indeed, by definition of $\ell_{h,k}$, we have $-2\ell_{h,k-1}-\sigma_{h,k-2}=\ell_{h,k}-\ell_{h,k-1}$.

\smallskip
We conclude that the above constructed liftings are in the same situation of the liftings constructed in 
the proof of Theorem \ref{SDF->DF}, i.e., (\ref{ell-ell}) holds. 
Thus, reasoning as at the end of that proof, we can say that they 
form a $(G\times\F_q,G\times\{0\},k,1)$-DF. The assertion follows considering that each of
these liftings is zero-sum.
\end{proof}

We are going to see that the above theorem allows to obtain a difference family as required in Lemma \ref{additive(kq,k,k,1)}
as soon as one has an additive $(G,k,\lambda)$-SDF with $G$ a zero-sum group of order $k$ and 
$\lambda$ not divisible by $rad(k)$. This will be the crucial ingredient for proving our main result.

\begin{lem}\label{crucial}
Assume that there exists an additive $(G,k,\lambda)$-SDF with $G$ a zero-sum group of order $k$ 
and assume that $k$ has a prime divisor not dividing $\lambda$. 
Then there exists a $G$-super-regular $2$-$(v,k,1)$ design for infinitely many values of $v$.
\end{lem}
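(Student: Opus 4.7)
The plan is to chain Theorem~\ref{additive version}(ii) and Lemma~\ref{additive(kq,k,k,1)}. Writing $s$ for the size of the given additive $(G,k,\lambda)$-SDF, the counting identity $\lambda|G|=sk(k-1)$ forces $s=\lambda/(k-1)$, and hence $(k-1)\mid\lambda$. A quick enumeration of the zero-sum $3$-multisets on $\Z_3$ (only $\{0,0,0\}$, $\{1,1,1\}$, $\{2,2,2\}$, $\{0,1,2\}$) shows that the hypothesis is vacuous when $k=3$: the contributions to $\Delta\mathcal F$ force $3\mid\lambda$. So I may assume $k\geq 4$ throughout.

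Fix a prime $p$ dividing $k$ but not $\lambda$, furnished by hypothesis. I would look for an integer $n$ such that $q:=p^n$ satisfies both
\[
q\equiv\lambda+1\pmod{2\lambda}\qquad\text{and}\qquad q>(2k-3)^{2}\lambda^{4k-6}.
\]
The congruence implies $q\equiv 1\pmod\lambda$, hence $q\equiv 1\pmod{k-1}$ since $(k-1)\mid\lambda$; this is precisely the congruence demanded by Lemma~\ref{additive(kq,k,k,1)}. The remaining hypothesis of Theorem~\ref{additive version}(ii), namely $|G|<2\lambda^{2k-5}s=2\lambda^{2k-4}/(k-1)$, is an easy inequality for $k\geq 4$ given $\lambda\geq k-1$ (one has $2\lambda^{2k-4}/(k-1)\geq 2(k-1)^{2k-5}\gg k$). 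Since $\mathrm{rad}(q)=p$ divides $k$, Theorem~\ref{additive version}(ii) supplies an additive $(G\times\F_q,G\times\{0\},k,1)$-DF. Lemma~\ref{additive(kq,k,k,1)} then promotes this DF into a $(G\times\F_{q^m})$-super-regular $2$-$(kq^m,k,1)$ design for every $m\geq 1$, producing infinitely many admissible values of $v$.

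The step I expect to be the main obstacle is the arithmetic one: showing that the congruence $p^n\equiv\lambda+1\pmod{2\lambda}$ has a solution, i.e., that $\lambda+1$ lies in the cyclic subgroup $\langle p\rangle\leq(\Z/2\lambda\Z)^*$. Once this is settled the multiplicative order of $p$ modulo $2\lambda$ delivers infinitely many admissible exponents and the size bound on $q$ is then automatic for $n$ large. I would split into cases on the parity of $\lambda$. If $\lambda$ is odd, then $(k-1)\mid\lambda$ forces $k$ to be even and I would take $p=2$; by CRT, $\Z/(2\lambda)\cong\Z/2\oplus\Z/\lambda$, and the condition collapses to $2^n\equiv 1\pmod\lambda$, which is solved by any multiple of $\mathrm{ord}_\lambda(2)$. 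If $\lambda=2^a\ell$ with $\ell$ odd and $a\geq 1$, then $p$ must be odd, and using the CRT splitting $(\Z/2\lambda\Z)^*\cong(\Z/2^{a+1}\Z)^*\times(\Z/\ell\Z)^*$ I would match the image $(1+2^a,1)$ of $1+\lambda$ against a suitable power of $p$, exploiting that $1+2^a$ is an involution in the $2$-primary factor to identify when it belongs to $\langle p\rangle$.
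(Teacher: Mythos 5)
Your skeleton is the same as the paper's (chain Theorem~\ref{additive version}(ii) with Lemma~\ref{additive(kq,k,k,1)}), and your preliminary observations --- $(k-1)\mid\lambda$ from $\lambda|G|=sk(k-1)$ with $|G|=k$, the vacuity of the case $k=3$, and the verification of the size bound $|G|<2\lambda^{2k-5}s$ --- are all correct. But the step you flag as ``the main obstacle'' is a genuine gap, and it cannot be closed along the lines you sketch. The congruence $p^{n}\equiv\lambda+1\pmod{2\lambda}$ is in general \emph{unsolvable} for the $\lambda$ of the given SDF. Writing $\lambda=2^{a}\ell$ with $\ell$ odd and $a\geq1$ (so $p$ is odd), the $2$-part of the condition reads $p^{n}\equiv1+2^{a}\pmod{2^{a+1}}$; if, say, $a=1$ and $p\equiv1\pmod 4$, then $p^{n}\equiv1\not\equiv3\pmod 4$ for every $n$, and no choice of exponent helps. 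This is not a contrived corner case: for $k=15$ the SDF produced by Theorem~\ref{goodSDF} has $\lambda=14\cdot3^{15}$ (so $a=1$) and $p=5\equiv1\pmod4$, so your plan stalls exactly on the first value of $k$ the whole paper is aiming at. Your remark that $1+2^{a}$ is an involution in $(\Z/2^{a+1}\Z)^{*}$ only ``identifies when'' it lies in $\langle p\rangle$; it does not make it lie there, and you offer no fallback when it does not.

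The missing idea is that $\lambda$ is not fixed: one may replace the SDF $\Sigma$ by $\underline{2^{e}}\Sigma$, which is an additive $(G,k,2^{e}\lambda)$-SDF, and choose $e$ so that the congruence holds \emph{by construction}. Concretely, let $n$ be the order of $p$ modulo $\lambda$, let $2^{e}$ be the exact power of $2$ dividing $(p^{n}-1)/\lambda$, and set $\lambda_{1}=2^{e}\lambda$; then $p^{n}-1=\lambda_{1}\mu$ with $\mu$ odd, whence $p^{ni}\equiv\lambda_{1}+1\pmod{2\lambda_{1}}$ for every odd $i$, and $p$ still does not divide $\lambda_{1}$ while $(k-1)\mid\lambda_{1}$ still gives $q\equiv1\pmod{k-1}$. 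Taking odd $i$ large enough to beat the bound $(2k-3)^{2}\lambda_{1}^{4k-6}$ then yields infinitely many admissible $q=p^{ni}$, and the rest of your argument (including the final appeal to Lemma~\ref{additive(kq,k,k,1)}) goes through verbatim with $\lambda_{1}$ in place of $\lambda$. Note also that when $\lambda$ is odd this device is harmless ($e=0$ if $p=2$), so the case split you propose becomes unnecessary.
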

\begin{proof}
Let $\Sigma$ be a SDF as in the statement and let $p$ be a prime divisor of $k$ not dividing 
$\lambda$. Let $n$ be the order of $p$ 
in the group of units of $\Z_{\lambda}$, let $2^e$ be the largest power of $2$ dividing ${p^n-1\over \lambda}$, 
and set $\lambda_1=2^e\lambda$.
Clearly, $\underline{2^e}\Sigma$ is an additive $(G,k,\lambda_1)$-SDF. 
We have $p^n-1=2^e\lambda\mu$ with $\mu$ odd, 
hence $q^n\equiv\lambda_1+1$ (mod $2\lambda_1$). 
It easily follows, by induction on $i$, that $p^{ni}\equiv\lambda_1+1$ (mod $2\lambda_1$) for every odd $i$.
It is obvious that $|G|=k<2\lambda_1^{2k-5}$ and of course there are infinitely many odd values of $i$ for 
which $p^{ni}>(2k-3)^2\lambda_1^{4k-6}$. Hence, by Theorem \ref{additive version},
there exists an additive $(G\times\F_{p^{ni}},G\times\{0\},k,1)$-DF for each of these odd values of $i$.
The assertion then follows from Lemma \ref{additive(kq,k,k,1)}.
\end{proof}


\section{The main result}

For the proof of the main result we need one more ingredient, that is the notion of a {\it difference matrix}.

If $G$ is an additive group of order $v$, a $(v,k,\lambda)$ difference matrix in $G$ (or briefly a $(G,k,\lambda)$-DM)
is a $(k\times \lambda v)$-matrix with entries in $G$ such that the difference of any two distinct rows
contains every element of $G$ exactly $\lambda$ times.
For general background on difference matrices we refer to \cite{BJL,CD}.

We will say that a DM is {\it additive} if each of its columns is zero-sum.
An adaptation of an old construction for ordinary difference families by Jungnickel \cite{J} allows us to prove the following.

\begin{lem}\label{dieter}
If $\Sigma$ is an additive $(G,k,\lambda)$-SDF and $M$ is an additive $(H,k,\mu)$-DM, then there
exists an additive $(G\times H,k,\lambda\mu)$-SDF.
\end{lem}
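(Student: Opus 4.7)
The plan is to adapt Jungnickel's classical composition (an SDF times a DM) and then verify both the zero-sum property for each resulting column-block and the counting of differences. Let $\Sigma=\{B_1,\dots,B_s\}$ with $B_h=\{b_{h,1},\dots,b_{h,k}\}$, and let $M=(m_{i,j})$ be the $k\times\mu|H|$ additive difference matrix. For each base block $B_h$ and each column index $j$ I would form the composite $k$-multiset
$$B_{h,j}=\{(b_{h,1},m_{1,j}),(b_{h,2},m_{2,j}),\dots,(b_{h,k},m_{k,j})\}\subset G\times H,$$
and set $\Sigma'=\{B_{h,j}\ |\ 1\leq h\leq s,\ 1\leq j\leq \mu|H|\}$. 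This is the natural candidate.

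Additivity is immediate: the first coordinates of $B_{h,j}$ sum to $\sum_i b_{h,i}=0$ because $\Sigma$ is additive, and the second coordinates sum to $\sum_i m_{i,j}=0$ because the $j$-th column of $M$ is zero-sum by assumption. So every member of $\Sigma'$ is zero-sum.

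For the difference count, fix $(g,\eta)\in G\times H$ and count the ordered quadruples $(h,j,i,i')$ with $i\neq i'$ such that $b_{h,i}-b_{h,i'}=g$ and $m_{i,j}-m_{i',j}=\eta$. For each pair $(i,i')$ with $i\neq i'$, the difference matrix property guarantees exactly $\mu$ indices $j$ with $m_{i,j}-m_{i',j}=\eta$; summing over all $(h,i,i')$ with $b_{h,i}-b_{h,i'}=g$ (of which there are exactly $\lambda$, by the SDF property, valid also for $g=0$) yields $\lambda\mu$ quadruples. Hence every element of $G\times H$, including zero, appears exactly $\lambda\mu$ times in $\Delta\Sigma'$, proving it is a $(G\times H,k,\lambda\mu)$-SDF.

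There is no real obstacle here; the only point that deserves care is the handling of $g=0$ and $\eta=0$, since the SDF may contain repeated elements (so $B_h$ is genuinely a multiset) and zero occurs with even multiplicity in $\Delta B_h$. The counting argument above is insensitive to this: it treats all ordered pairs $(i,i')$ with $i\neq i'$ uniformly, and both the SDF condition and the DM condition are stated for every group element (zero included), so the product $\lambda\mu$ comes out correctly across the board.
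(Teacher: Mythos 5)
Your construction is exactly the composition $\Sigma\circ M$ used in the paper, and your additivity check and difference count (which the paper leaves as "straightforward to check") are both correct. No issues.
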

\begin{proof}
Let $\Sigma$ be a $(G,k,\lambda)$-SDF and let $M=(m_{rc})$ be an $(H,k,\mu)$-DM.
For each block $B=\{b_1,\dots,b_k\}\in\Sigma$ and each column $M^c=(m_{1c},\dots,m_{kc})^T$
of $M$, consider the $k$-multiset $B\circ M^c$ defined as follows:
$$B\circ M^c=\{(b_1,m_{1c}),\dots,(b_k,m_{kc})\}.$$
It is straightforward to check that
$$\Sigma\circ M:=\{B\circ M^c \ | \ B\in{\cal F}; 1\leq c\leq \mu|H|\}$$
is a $(G\times H,k,\lambda\mu)$-SDF. It is clearly additive in the hypothesis that both
$\Sigma$ and $M$ are additive.
\end{proof}

In the proof of the following theorem we construct the crucial ingredient considered in Lemma \ref{crucial}.

\begin{thm}\label{goodSDF}
Let $k$ be a positive integer which is neither a prime power, nor singly even, nor of the form $2^n3$.
Then there exists an additive $(G,k,\lambda)$-SDF in a suitable zero-sum group of order $k$
with $\gcd(k,\lambda)=1$.
\end{thm}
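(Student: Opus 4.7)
The plan is to construct an additive $(G,k,\lambda)$-SDF on a zero-sum abelian group $G$ of order $k$ with $\gcd(\lambda,k)=1$, by combining Paley difference multisets on prime-power divisors of $k$, additive multiplication-table difference matrices, Lemma~\ref{dieter}, and a Chinese Remainder Theorem style argument. Under the theorem's hypotheses, $k$ admits an odd prime-power divisor $q=p^a\geq 5$ with $\gcd(q,k/q)=1$: this is precisely what the exclusions (not prime-power, not singly-even, not $2^n\cdot 3$) guarantee. Writing $m=k/q$, I take $G=\F_q\times H$ with $H=\prod_{q'\mid m}\F_{q'}$, a zero-sum group of order $m$ since each field factor has order $\geq 3$ (Fact~\ref{fact1}; the singly-even exclusion is what prevents $\F_2$ from appearing as a factor).

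For each $q$ I construct an additive $(G,k,\lambda_q)$-SDF $\Sigma_q$ on $G$ as follows. First, I produce an additive $(\F_q,k,m^2(k-1))$-SDF on $\F_q$ whose blocks are one copy of the \emph{inflated Paley multiset} $B_1 = m\cdot(\{0\}\cup\underline 2\,\F_q^\Box)$ together with $m-1$ copies of the uniform block $B_2 = m\cdot\F_q$. Both are zero-sum: the first by Fact~\ref{fact1} applied to the non-trivial subgroup $\F_q^\Box$ of $\F_q^*$ (using $q\neq 3$), the second because $\F_q$ is not binary. A short Fourier calculation using the Paley identity $|\widehat{B_P}(\chi)|^2 = q$ for every non-trivial character $\chi$ of $\F_q$ verifies $\sum_B|\widehat{B}(\chi)|^2 = sk$ and a uniform difference distribution. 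Second, I build an additive $(H,k,\mu)$-DM with $\mu$ coprime to $q$ by applying the MacNeish/Jungnickel product to the multiplication-table additive DMs $(\F_{q'},q',1)$ on the prime-power factors of $H$, adjusting $\mu$ by concatenating parallel copies as needed. Lemma~\ref{dieter} then combines these into an additive $(G,k,m^2(k-1)\mu)$-SDF $\Sigma_q$ on $G$ whose parameter $\lambda_q$ is divisible by $m=k/q$ and coprime to $q$ (the latter because $\gcd(m,q)=\gcd(k-1,q)=\gcd(\mu,q)=1$).

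To force $\gcd(\lambda,k)=1$, I iterate the above construction once for each prime-power factor $p^a\|k$, letting $q=p^a$ play in turn the role of the distinguished factor, and take the disjoint union $\Sigma=\bigsqcup_{p\mid k}\Sigma_{p^a}$. The union is again an additive SDF on $G$ with parameter $\lambda=\sum_p\lambda_{p^a}$. For every prime-power factor $p^a\|k$, each summand $\lambda_{p'^{a'}}$ with $p'\neq p$ is divisible by $k/p'^{a'}$ and hence by $p^a$, so $\lambda\equiv\lambda_{p^a}\pmod{p^a}$ and $\gcd(\lambda,p^a)=\gcd(\lambda_{p^a},p^a)=1$; combining over all primes gives $\gcd(\lambda,k)=1$.

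The principal obstacle is the prime-power factor $p^a=3$ when $3\|k$, since the Paley multiset on $\F_3$ is not zero-sum and a parity argument on the autocorrelation shows that \emph{every} additive SDF on $\F_3$ of block size divisible by $3$ has parameter divisible by $3$. To build $\Sigma_3$ in this regime one must bypass the $\F_3$ step and drive the mod-$3$ control through a different prime-power factor of $k$; the hypothesis $k\neq 2^n\cdot 3$ is exactly what guarantees the existence of such an auxiliary odd prime power $\geq 5$ in $k$. The excluded cases of the theorem correspond precisely to the parameter configurations in which either the basic factorization, the zero-sum requirement on $G$, or this $\mathrm{mod}\,3$ substitution breaks down.
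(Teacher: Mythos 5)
Your core construction is the paper's: inflate the Paley multiset $\{0\}\cup\underline{2}\F_q^\Box$ by $r=k/q$ and pad with $r-1$ copies of $\underline{r}\F_q$ to get an additive $(\F_q,k,(k-1)r^2)$-SDF, then multiply by an additive difference matrix over a zero-sum group of order $r$ via Lemma \ref{dieter}. (The paper verifies the difference counts directly from the (partial) difference set property of $\F_q^\Box$ rather than by characters, but that is cosmetic.) Two of your steps, however, have genuine gaps. First, the difference matrix: a MacNeish product of the multiplication tables of the field factors of $H$ has only as many rows as the smallest factor, not the $k$ rows an $(H,k,\mu)$-DM must have, and stacking copies of rows destroys the DM property (a row minus itself is identically zero). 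The paper instead takes $M$ to be the matrix whose columns are \emph{all} zero-sum $k$-tuples over $H$, which a direct count shows to be an additive $(r,k,r^{k-2})$-DM; this is the step you need, and it also delivers $\gcd(\mu,q)=1$ for free.

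Second, and more seriously, the closing CRT argument cannot be completed. Your union $\Sigma=\bigsqcup_p\Sigma_{p^a}$ requires, for \emph{every} exact prime-power divisor $p^a$ of $k$, an additive $(G,k,\lambda_{p^a})$-SDF with $\gcd(\lambda_{p^a},p)=1$. Your recipe produces these only when $p^a$ is an odd prime power at least $5$; it gives nothing for $p=2$, and for $p^a=3$ no such SDF exists at all: precisely the projection-onto-$\Z_3$ argument you cite (used in the paper's Theorem \ref{nonexistence}) shows that when $3$ exactly divides $k$, every additive $(G,k,\lambda)$-SDF on a group of order $k$ has $3\mid\lambda$. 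So the ``mod-$3$ substitution'' invoked in your last paragraph is not a fixable technicality but an impossibility, and the literal conclusion $\gcd(k,\lambda)=1$ is unattainable for, e.g., $k=15$. In fact the paper's own proof yields $\lambda=(k-1)r^k$, for which $r\mid\gcd(k,\lambda)$; what it actually establishes (and all that Lemma \ref{crucial} needs) is $\gcd(q,\lambda)=1$ for the chosen odd prime power $q$, i.e., that \emph{some} prime divisor of $k$ does not divide $\lambda$. Dropping your union step, keeping a single $\Sigma_q$ for the largest odd prime power factor $q$ of $k$, and using the zero-sum-tuples difference matrix gives the correct argument.
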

\begin{proof}
Let $q$ be the largest odd prime power factor of $k$ and set $k=qr$.
The hypotheses on $k$ guarantee that $q$ is greater than 3. 
Now consider the $k$-multiset $A$ on $\F_q$ which is union
of $r$ copies of the $(q,q,q-1)$ Paley difference multiset of the first type: 
$$A=\underline{r}\{0\} \ \uplus \ \underline{2r}\F_q^\Box.$$

Let $\alpha: \F_q \longrightarrow \N$ be the map where $\alpha(x)$ is the multiplicity
of $x$ in $\Delta A$ for every $x\in \F_q$. We have 
$$\alpha(0)=r(r-1)+{q-1\over2}2r(2r-1)=(2q-1)r^2-qr.$$
Now let $x$ be an element of $\F_q^*$ and distinguish two cases according to whether
$q\equiv1$ or 3 (mod 4).

\underline{1st case}: $q\equiv1$ (mod 4).\quad
In this case it is well-known that $\F_q^\Box$ is a partial $(q,{q-1\over2},{q-5\over4},{q-1\over4})$ 
difference set\footnote{A $k$-subset $B$ of an additive group $G$ of order $v$ is a $(v, k, \lambda,\mu)$ {\it partial difference set} 
if $\Delta B=\underline{\lambda}(B\setminus\{0\}) \ \cup \ \underline{\mu}(G\setminus (B\cup\{0\})$.
If $\lambda=\mu$ then $B$ is a $(v,k,\lambda)$ {\it difference set}.}. 
If $x\in \F_q^\Box$, there are ${q-5\over4}$ representations of $x$ as a difference from $\F_q^\Box$.
Each of them has to be counted $(2r)^2$ times in the number of representations of $x$ as a difference from
$A$. The remaining representations of $x$ as a difference from $A$ are $x=x-0$ ($2r\cdot r$ times) and $x=0-(-x)$
($r\cdot2r$ times). Thus we have $\alpha(x)=(4r^2){q-5\over4}+2r^2+2r^2=(q-1)r^2$.

If $x\in \F_q^{\not\Box}$, there are ${q-1\over4}$ representations of $x$ as a difference from $\F_q^\Box$.
Each of them has to be counted $(2r)^2$ times in the number of representations of $x$ as a difference from
$A$. There is no other representation of $x$ as a difference from $A$. 
Hence we have $\alpha(x)=(4r^2){q-1\over4}=(q-1)r^2$.

\underline{2nd case}: $q\equiv3$ (mod 4).\quad
Here, $\F_q^\Box$ is a $(q,{q-1\over2},{q-3\over4})$ difference set. 
Every $x\in \F_q^*$ admits precisely ${q-3\over4}$ representations as a difference from $\F_q^\Box$.
Each of them has to be counted $(2r)^2$ times in the number of representations of $x$ as a difference from
$A$. The remaining representations of $x$ as a difference from $A$ are $x=x-0$ ($2r\cdot r$ times) if $x$
is a square, or $x=0-(-x)$ ($r\cdot2r$ times) if $x$ is not a square. 
Thus, for every $x\in \F_q^*$ we have $\alpha(x)=(4r^2){q-3\over4}+2r^2=(q-1)r^2$.

In summary, we have:
\begin{equation}\label{alpha}
\alpha(0)=(2q-1)r^2-qr\quad{\rm and}\quad\alpha(x)=(q-1)r^2 \ \forall x\in\F_q^*
\end{equation}
Now let $B=\underline{r}\F_q$ be the $k$-multiset which is union of $r$ copies of $\F_q$ and let
$\beta: \F_q \longrightarrow \N$ be the map of multiplicities of $\Delta B$. It is quite evident that we have:
\begin{equation}\label{beta}
\beta(0)=qr(r-1) \quad{\rm and}\quad  \beta(x)=qr^2 \ \forall x\in\F_q^*
\end{equation}
We claim that
$$\Sigma=\{A,\underbrace{B,\dots,B}_{r-1 \ {\rm times}}\}$$
is a $(q,k,(k-1)r^2)$-SDF in $\F_q$. Indeed, if $\sigma$ is 
the map of multiplicities of $\Delta \Sigma$, in view of (\ref{alpha}) and (\ref{beta}) we have:

\medskip
$\sigma(0)=\alpha(0)+(r-1)\beta(0)=(2q-1)r^2-qr+qr(r-1)^2=(qr-1)r^2;$

\medskip
$\sigma(x)=(q-1)r^2+qr^2(r-1)=(qr-1)r^2\quad\forall x\in\F_q^*.$

\medskip
Considering that $\F_q^\Box$ is a zero-sum subset of $\F_q$ for $q>3$ (see Fact \ref{fact2}), 
the multiset $A$  is zero-sum. Also, considering that $\F_q$ is zero-sum, $B$ is zero-sum as well. 
We conclude that $\Sigma$ is additive.

The hypothesis that $k$ is not singly even implies that $r$ is also not singly even.
Hence we can take an abelian zero-sum group $H$ of order $r$. 
Let $M$ be the matrix whose columns are all possible zero-sum
$k$-tuples of elements of $H$ summing up to zero. 
Let $(i,j)$ be any pair of distinct elements of $\{1,\dots,k\}$ and let $h$ be any element of $H$.
The number of zero-sum $k$-tuples $(m_1,...,m_k)$ of elements of $H$ such that $m_i-m_j=h$
is equal to $r^{k-2}$. Indeed each of these $k$-tuples can be constructed as follows. Fix any element $\ell$ in 
$\{1,...,k\}\setminus\{i,j\}$, take $m_x$ arbitrarily for $x \notin\{i,\ell\}$, and then we are forced
to take $m_i=m_j+h$ and $m_\ell=-\sum_{x\neq \ell}m_x$.

The above means that there are exactly $r^{k-2}$ columns $(m_{1,c},\dots,m_{k,c})^T$ of $M$ 
such that $m_{i,c}-m_{j,c}=h$. Equivalently, the difference between the $i$th row and the $j$th row
of $M$ covers the element $h$ exactly $r^{k-2}$ times.
Thus, in view of the arbitrariness of $i$, $j$ and $h$,  $M$ is a $(r,k,r^{k-2})$ difference matrix.
Of course it is additive by construction.

Thus, applying Lemma \ref{dieter}, we can say that $\Sigma\circ M$ is an additive $(k,k,\lambda)$-SDF 
in $G:=\F_q\times H$ with $\lambda=(k-1)r^k$. Recall that $q$ is the largest odd prime power factor of $k$ so that
$q$ is coprime with both $k-1$ and $r={k\over q}$. 
Thus $\lambda$ is coprime with $k$ and the assertion follows.
\end{proof}

{\bf Proof of Theorem \ref{main}.}\quad
If $k$ is a prime power we have the super-regular $2$-$(k^n,k,1)$ designs associated
with $AG(n,k)$. The singly even values of $k$ are genuine exceptions in view of
Proposition \ref{k=4n+2}(iv). Finally, if $k$ is neither a prime power, nor singly even, nor of the form $2^n3$,
then the assertion follows from Theorem \ref{goodSDF} and Lemma \ref{crucial}.
\hfill$\Box$

\section{A huge number of points}

As already mentioned in the introduction the super-regular Steiner
2-designs obtainable by means of the main construction (Theorem \ref{goodSDF} combined
with Lemma \ref{crucial}) have a huge number of points. On the other hand, there are some 
hopes to find more handleable super-regular Steiner 2-designs.  
We discuss this for the first relevant value of $k$, that is $k=15$.

Let us examine, first, which is the smallest $v$ for which the main construction leads
to a non-trivial super-regular $2$-$(v,15,1)$ design.
Keeping the same notation as in Theorem \ref{goodSDF}, we have $q=5$, $r=3$ and 
$\Sigma\circ M$ is a $(15,15,\lambda)$-SDF in $\Z_3\times\Z_5\simeq\Z_{15}$
with $\lambda=14\cdot3^{15}$. Now proceed as in the proof of Lemma \ref{crucial} taking $p=5$.
The order of $5$ in $\Z_{\lambda}$ is $n=2\cdot3^{14}=9565938$
and the largest power of 2 in ${q^n-1\over\lambda}$ is 4. Thus  $\underline{4}(\Sigma\circ M)$
is a $(15,15,\lambda_1)$-SDF with $\lambda_1=4\lambda$ and we have $5^{ni}\equiv \lambda_1+1$ (mod $2\lambda_1$)
for every odd $i$. One can check that $5^n>(2k-3)^2\lambda_1^{4k-6}=27^2\cdot(56\cdot3^{15})^{54}$.
Hence we have an additive $(\Z_{15}\times\F_{5^{n}},\Z_{15}\times\{0\},15,1)$-DF.
In conclusion, the first $v$ for which the application of Lemma \ref{crucial} with the use of 
$\Sigma\circ M$ leads to a super-regular
$2$-$(v,15,1)$ design is $3\cdot5^{9565939}$.

On the other hand, in this specific case, we can find a much lower $v$ with the
use of another SDF.
Consider the following three 15-multisets on $\Z_{15}$
$$B=\{0\} \ \cup \ \underline{2}\{1,2,3,7,9,11,12\};$$
$$B'=\{0\} \ \cup \ \underline{2}\{1,3,4,5,7,12,13\};$$
$$B''=\{0\} \ \cup \ \underline{2}\{1,5,8,10,11,12,13\}.$$
It is straightforward to check that $\Sigma'=\{B,B',B''\}$ is an additive $(15,15,\lambda')$-SDF
with $\lambda'=42$.
Let us apply Lemma \ref{crucial} using $\Sigma'$ rather than $\Sigma\circ M$. The order of $q=5$ in 
$\Z_{\lambda'}$ is $n=6$ and the largest power of 2 in ${q^n-1\over\lambda'}$ is $4$.
Thus $\underline{4}\Sigma'$
is a $(15,15,\lambda'_1)$-SDF with $\lambda'_1=4\lambda'$ and we have 
$5^{6i}\equiv \lambda'_1+1$ (mod $2\lambda'_1$)
for every odd $i$. The first odd $i$ for which $5^{ni}>(2k-3)^2{\lambda'_1}^{4k-6}$ is $31$.
Hence, the first $v$ for which the use of $\Sigma'$ in Lemma \ref{crucial} gives 
a super-regular $2$-$(v,15,1)$ design is $3\cdot5^{187}$.

Now we show a more clever use of $\Sigma'$ which exploits its nice form (every base block is of
the form  $\{0\} \ \cup \ \underline{2}A$ with $A$ a 7-subset of $\Z_{15}\setminus\{0\}$). 
Let $q\equiv1$ (mod 42) be a prime power and lift the blocks of $\Sigma'$ to
three zero-sum 15-subsets of $\Z_{15}\times \F_q$ of the form
\small
$$\ell(B)=\{(0,0),(1,\pm\ell_1),(2,\pm\ell_2),(3,\pm\ell_3),(7,\pm\ell_4),(9,\pm\ell_5),(11,\pm\ell_6),(12,\pm\ell_7)\},$$
$$\ell(B')=\{(0,0),(1,\pm\ell'_1),(3,\pm\ell'_2),(4,\pm\ell'_3),(5,\pm\ell'_4),(7,\pm\ell'_5),(12,\pm\ell'_6),(13,\pm\ell'_7)\},$$
$$\ell(B'')=\{(0,0),(1,\pm\ell''_1),(5,\pm\ell''_2),(8,\pm\ell''_3),(10,\pm\ell''_4),(11,\pm\ell''_5),(12,\pm\ell''_6),(13,\pm\ell''_7)\},$$
\normalsize
where, to save space, we have written $(x,\pm y)$ to mean the two pairs $(x,y)$ and $(x,-y)$.
We have $\displaystyle\Delta\ell(B) \ \cup \ \Delta\ell(B') \ \cup \ \Delta\ell(B'')=\bigcup_{i=0}^{14}\{i\}\times\Delta_i$ with
$\Delta_i=\{1,-1\}\cdot\overline{\Delta}_i$ where each $\overline{\Delta}_i$ is a list of 21 elements of $\F_q$. For instance, it is readily
seen that $\overline{\Delta}_0=\{\ell_i, \ell'_i, \ell''_i \ | \ 1\leq i\leq 7\}$.

Assume that the above liftings are done in such a way that each $\overline{\Delta}_i$ is a complete
system of representatives for the cyclotomic classes of order 21. In this case we have $\Delta_i\cdot M=\F_q^*$ with $M$
a system of representatives for the cosets of $\{1,-1\}$ in $C^{21}$ and then, by Construction \ref{constr},
we get an additive $(\Z_{15}\times\F_{q},\Z_{15}\times\{0\},15,1)$-DF.
Reasoning as in the proof of Theorem \ref{SDF->DF}, one can see that the required liftings
certainly exist by Lemma \ref{BP} provided that $q>6^2\cdot21^{12}$. Now note that
we have $5^{6i}\equiv1$ (mod 42) for every $i\geq0$ and $5^{6i}>6^2\cdot21^{12}$ as soon as $i\geq5$.
Thus we have an additive $(\Z_{15}\times\F_{5^{30}},\Z_{15}\times\{0\},15,1)$-DF.
So the first $v$ for which this construction leads, theoretically, to a strictly additive
$2$-$(v,15,1)$ design is $3\cdot5^{31}$ that is dramatically smaller than the value obtained
before by applying the main construction ``with the blinkers".
Yet, it is still huge! We cannot exclude, however, that by means of a (probably heavy) computer work
one may realize a good lifting of $\Sigma'$ with $q=5^6$. In this case we should have 
a $2$-$(3\cdot5^7,15,1)$ design.

\section{Super-regular non-Steiner 2-designs}
As underlined in the introduction, the paper is focused on super-regular Steiner 2-designs since their construction appears to be challenging. 
Here we just sketch how the methods used in the previous sections allow to obtain super-regular non-Steiner 2-designs
much more easily and with a relatively ``small" number of points. In particular, without any
need of cyclotomy (that is the heaviest tool used) it is possible to show that every additive $(k,k,\lambda)$-SDF with $k$
not singly even gives rise to a super-regular $2$-$(kq,k,\lambda)$ 
design for any power $q>k$ of a prime divisor of $k$.

First, we need to recall the following well known fact.
\begin{prop}\label{rem2}
Let ${\cal F}$ be a $(v,k,k,\lambda)$-DF in $G$ relative to $H$, let $\cal C$ be the set of right cosets of $H$ in $G$, and set
$${\cal B}=\{B + g \ | \ B\in{\cal F}; g\in G\} \ \cup \ \underline{\lambda}{\cal C}.$$ 
Then $(G,{\cal B})$ is a $G$-regular $2$-$(v,k,\lambda)$ design.
\end{prop}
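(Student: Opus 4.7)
My plan is to verify directly that $(G,{\cal B})$ satisfies the two defining properties, namely $G$-regularity and the $2$-design condition with parameters $(v,k,\lambda)$.

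First, I would check $G$-regularity, which is immediate. The translation action of $G$ on itself is sharply transitive on points, and it sends each translate $B+g$ of a base block to another translate $B+(g+g')$, and each right coset of $H$ to another right coset; hence $\cal B$ is preserved. Next, observe that every block has size exactly $k$: the translates of base blocks have size $k$ by definition of the DF, and each coset of $H$ has size $|H|=k$ by assumption. This also shows $v=|G|$ is a multiple of $k$, consistent with the parameters.

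The main content is to prove that every pair of distinct points $x,y\in G$ lies in exactly $\lambda$ blocks of $\cal B$. The key idea is a case split based on whether the difference $x-y$ belongs to $H$ or to $G\setminus H$.

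\begin{itemize}
\item If $x-y\in H\setminus\{0\}$, then $x$ and $y$ lie in a common right coset of $H$, and in exactly one such coset; this coset appears with multiplicity $\lambda$ in $\underline{\lambda}{\cal C}$, accounting for $\lambda$ blocks. On the other hand, no translate $B+g$ of a base block can cover $\{x,y\}$: such a cover would give a representation $x-y=b_i-b_j$ with $b_i,b_j\in B\in{\cal F}$, contradicting the fact that $\Delta{\cal F}$ consists only of elements of $G\setminus H$ (each counted $\lambda$ times).
\item If $x-y\in G\setminus H$, then $x$ and $y$ lie in distinct cosets of $H$, so no element of $\underline{\lambda}{\cal C}$ contains both. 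By the defining property of the relative DF, the element $x-y$ is represented exactly $\lambda$ times as an ordered difference $b_i-b_j$ of elements from the base blocks of $\cal F$. Each such representation determines a unique translate $B+g$ (with $g=x-b_i=y-b_j$) of the corresponding base block $B$ that contains both $x$ and $y$, and conversely every translate of a base block containing $\{x,y\}$ arises this way. Hence $\{x,y\}$ lies in exactly $\lambda$ translates, and no cosets.
\end{itemize}

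Either way the pair $\{x,y\}$ is covered exactly $\lambda$ times, completing the verification. The main (entirely routine) obstacle is merely bookkeeping to confirm that the correspondence between representations of $x-y$ as a difference and translates containing $\{x,y\}$ is a bijection; this is the same argument used in the classical construction recalled in Remark \ref{rem}, applied here with the relative version of the DF.
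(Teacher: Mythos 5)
Your verification is correct and is precisely the standard argument: the case split on whether $x-y$ lies in $H$ or in $G\setminus H$, together with the bijection between ordered representations $x-y=b_i-b_j$ with $b_i,b_j$ in a base block $B$ and translates $B+g$ (via $g=x-b_i=y-b_j$) containing $\{x,y\}$, is exactly what is needed. The paper itself offers no proof of this proposition --- it is cited as a well-known fact, contained in Remark \ref{rem}(r2) for $\lambda=1$ --- so your write-up simply supplies the classical verification that the authors leave implicit.
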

The above is contained in Remark \ref{rem} (r2) for $\lambda=1$ and 
produces a non-simple design for $\lambda>1$. 
\begin{lem}\label{additive(kq,k,k,lambda)}
If there exists an additive $(G\times\F_q,G\times\{0\},k,\lambda)$-DF with $G$ a zero-sum group of order $k$,
then there exists a super-regular $2$-$(kq,k,\lambda)$ design.
\end{lem}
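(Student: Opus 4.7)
The plan is to apply Proposition \ref{rem2} to the given additive DF and then verify by a direct calculation that every block of the resulting design is zero-sum.

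First, I would invoke Proposition \ref{rem2} on the additive $(G\times\F_q, G\times\{0\}, k, \lambda)$-DF ${\cal F}$. Setting $H := G\times\{0\}$ and letting $\cal C$ denote the set of its cosets in $G\times\F_q$, this immediately produces a $(G\times\F_q)$-regular $2$-$(kq, k, \lambda)$ design on the point set $G\times\F_q$ whose block-set is $\{B+g : B\in{\cal F}, g\in G\times\F_q\} \ \cup \ \underline{\lambda}{\cal C}$. This step takes care of regularity with no effort, and the point count $|G\times\F_q|=kq$ matches the target.

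The bulk of the work, such as it is, lies in showing that every block in this block-set is zero-sum. The key observation I would use is that $kg=0$ for every $g=(g_1,g_2)\in G\times\F_q$: for $g_1$ this follows from Lagrange's theorem since $|G|=k$, and for $g_2$ it follows from the standing hypothesis of this section that $q$ is a power of a prime divisor of $k$, so that $\F_q$ has characteristic dividing $k$. Granted this, a translate $B+g$ of a base block satisfies $\sum_{b\in B}(b+g)=(\sum_{b\in B}b)+kg=0$ by the additivity of $\cal F$, while any coset $H+g$ satisfies $\sum_{h\in H}(h+g)=(\sum_{h\in H}h)+kg=0$ because $H=G\times\{0\}$ is zero-sum (here invoking the hypothesis that $G$ itself is zero-sum).

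Hence every block is zero-sum, so the design is strictly $(G\times\F_q)$-additive, and combined with the regularity from the first step this yields super-regularity, as required. The main (and really only) subtlety is recognizing that the hypothesis ``$q$ is a power of a prime divisor of $k$'' is precisely what makes $k$ annihilate $\F_q$; without it Proposition \ref{rem2} would still yield a regular design but not a super-regular one. Everything else is a one-line verification, which is why the section's introductory remark emphasizes that, in contrast to the Steiner case, no cyclotomy is needed.
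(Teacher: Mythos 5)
Your proof is correct and takes essentially the same route as the paper, whose entire argument is to apply Proposition \ref{rem2} and declare the resulting regular design ``clearly additive''; you have simply filled in that verification (translates of zero-sum base blocks and cosets of the zero-sum subgroup $G\times\{0\}$ remain zero-sum because $kg=0$ for every $g\in G\times\F_q$). Your observation that this hinges on $rad(q)$ dividing $k$ --- a hypothesis not written in the lemma's statement but present in the surrounding context and in the theorem that invokes it --- is the one genuine subtlety, and you handle it correctly.
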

\begin{proof}
The $(G\times\F_{q})$-regular $2$-$(kq,k,\lambda)$ design obtainable from $\cal F$ using Proposition \ref{rem2} is clearly additive. 
The assertion follows.
\end{proof}
\begin{thm}
If there exists an additive $(k,k,\lambda)$-SDF with $k$ not singly even, then there exists a super-regular $2$-$(kq,k,\lambda)$ design
for every power $q>k$ of a prime divisor of $k$.
\end{thm}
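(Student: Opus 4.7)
The plan is to deduce the theorem from Lemma~\ref{additive(kq,k,k,lambda)} by producing an additive $(G\times\F_q,G\times\{0\},k,\lambda)$-DF, where $G$ is the (zero-sum) group of order $k$ hosting the given additive SDF $\Sigma=\{B_1,\ldots,B_s\}$. The key reason cyclotomy is not needed here, in contrast with Theorems~\ref{SDF->DF} and~\ref{additive version}, is that we are free to take the full multiplicative group $M=\F_q^*$ as the multiplier set in the scheme of Construction~\ref{constr}. This reduces the hypothesis on the lifting to a single requirement: within each $\ell(B_h)$ the second coordinates should be pairwise distinct, so that every auxiliary multiset $\Delta_g$ avoids $0$ and therefore $\Delta_g\cdot\F_q^*=\lambda\cdot\F_q^*$ as multisets.

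The first step is to exhibit a zero-sum $k$-subset $T=\{t_1,\ldots,t_k\}$ of $\F_q$. Writing $k=p^d r$ with $p\mid k$, $q=p^n$ and $\gcd(r,p)=1$, I would take $T$ to be the union of any $r$ of the $p^{n-d}$ available cosets of a fixed $d$-dimensional $\F_p$-subspace $V\leq\F_q$. Each such coset is zero-sum because $|V|=p^d$ annihilates $\F_q$ in characteristic $p$, while the subspace itself is zero-sum whenever $d\geq 1$ (for $p$ odd) or $d\geq 2$ (for $p=2$, the latter being forced by $k\equiv 0\pmod 4$). The assumption $q>k$ translates exactly into $r\leq p^{n-d}$, so $r$ disjoint cosets can indeed be chosen.

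Next, lift each $B_h=\{b_{h1},\ldots,b_{hk}\}$ to $\ell(B_h)=\{(b_{h1},t_1),\ldots,(b_{hk},t_k)\}\subset G\times\F_q$ via any bijection $i\mapsto t_i\in T$. Distinctness of the $t_i$ ensures that $\ell(B_h)$ is a genuine $k$-subset (even if $B_h$ has repetitions as a multiset), while the zero-sum property of both $B_h$ and $T$ forces $\ell(B_h)$ itself to be zero-sum.

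Finally, forming $\mathcal{F}=\{\ell(B_h)\circ m:1\leq h\leq s,\, m\in\F_q^*\}$ as in Construction~\ref{constr} (now with $M=\F_q^*$): for each $g\in G$ the multiset $\Delta_g$ consists of $\lambda$ nonzero differences $t_i-t_j$ (indexed by the triples $(h,i,j)$ with $b_{hi}-b_{hj}=g$), so $\Delta_g\cdot\F_q^*$ covers $\F_q^*$ with multiplicity exactly $\lambda$. Thus $\mathcal{F}$ is an additive $(G\times\F_q,G\times\{0\},k,\lambda)$-DF, and Lemma~\ref{additive(kq,k,k,lambda)} delivers the required super-regular $2$-$(kq,k,\lambda)$ design. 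The only delicate step is the existence of $T$; everything else follows mechanically from the SDF property and the choice of $M$, which is what frees the argument from the Weil-type input of Sections~\ref{sectionSDF} onwards.
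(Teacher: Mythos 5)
Your proof is correct and follows essentially the same route as the paper's: lift the blocks of the SDF via a single zero-sum $k$-subset of $\F_q$ with pairwise distinct entries, multiply the second coordinates by all of $\F_q^*$, and conclude via Lemma~\ref{additive(kq,k,k,lambda)}. The only (minor) difference is that you construct the zero-sum $k$-subset explicitly as a union of cosets of an $\F_p$-subspace, whereas the paper simply asserts its existence with a reference.
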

\begin{proof}
Let $\Sigma=\{B_1,\dots,B_s\}$ be an additive $(k,k,\lambda)$-SDF in $G$ and let $q$ be a prime
power as in the statement. Take a zero-sum $k$-subset $L=\{\ell_1,\dots,\ell_k\}$ of $\F_q$
whose existence is almost evident\footnote{It is also an immediate consequence of a formula giving
the precise number of $k$-subsets of $\F_q$ whose sum is an assigned $b\in\F_q$ (see Theorem 1.2 in \cite{LW} or, for
an easier proof, Theorem 1.1(3) in \cite{Pavone2}).}.
Lift each block $B_h=\{b_{h1},\dots,b_{hk}\}$ of $\Sigma$ to the subset $L_h=\{(b_{h1},\ell_{1}),\dots,(b_{hk},\ell_{k})\}$
of $G\times\F_q^*$. By definition of a strong difference family,
we have $\Delta\{L_1,...,L_h\}=\biguplus_{g\in G}\{g\}\times\Delta_g$ where each $\Delta_g$ is a $\lambda$-multiset on $\F_q^*$
so that we have
\begin{equation}\label{penultimate}
\Delta_g\cdot\F_q^*=\underline{\lambda}\F_q^*\quad\forall g\in G.
\end{equation}
Given $m\in\F_q^*$, denote by $L_h\circ m$ the subset of $G\times\F_q$ obtained from $L_h$ by 
multiplying the second coordinates of all its elements by $m$.
Taking (\ref{penultimate}) into account, it is easily seen that 
\begin{equation}\label{last}
{\cal F}=\{L_h\circ m \ | \ 1\leq h\leq s; m\in \F_q^*\}
\end{equation}
is a $(G\times\F_q,G\times\{0\},k,\lambda)$-DF. Also, we note that ${\cal F}$ is additive since $\Sigma$ is additive 
and $L$ is zero-sum. The assertion then follows from Lemma \ref{additive(kq,k,k,lambda)}.
\end{proof}
Applying the above theorem using the $(15,15,42)$-SDF given in the previous section, we find
a super-regular $2$-$(15q,15,42)$ design for every power $q$ of 3 or 5 not smaller than 25. 
Here, however, in view of the special form of the used $(15,15,42)$-SDF, one could see that if $L$
is chosen more carefully as in Section 9 and if in (\ref{last}) we make $m$ vary in a system of
representatives for the cosets of $\{1,-1\}$ in $\F_q^*$ rather than in the whole $\F_q^*$, we get an additive
a $(\Z_{15}\times\F_q,\Z_{15}\times\{0\},15,\lambda)$-DF with $\lambda=21$ rather than 42.
Thus we can say  that there exists a super-regular $2$-$(15q,15,21)$ design for every power $q$ of 3 or 5 not smaller than 25.  
In particular, using $q=25$, we can say that there exists a super-regular $2$-$(375,15,21)$ design.


\section{Open questions}

Our research leaves open several questions.
The most intriguing is probably the following.
\begin{itemize}
\item[(Q1)] Does there exist a strictly $G$-additive Steiner 2-design which is not $G$-regular?
\end{itemize}
Here are some other questions which naturally arise.
\begin{itemize}
\item[(Q2)] Do there exist strictly additive $2$-$(v,k,1)$ designs with $k$ singly even?
\item[(Q3)] Do there exist super-regular Steiner 2-designs with block size $k=2^n3\geq12$?
\end{itemize}
Finally, it would be desirable to solve the following problem.
\begin{itemize}
\item[(P)] Find an additive Steiner 2-design with a non-primepower block size
and a ``reasonably small" number of points.
\end{itemize}

\section*{Acknowledgements}
The authors wish to thank the anonymous referees for their careful reading and some helpful comments. 

This work has been performed under the auspices of the G.N.S.A.G.A. of the C.N.R. (National Research Council) of Italy.

The second author is supported in part by the Croatian Science Foundation
under the projects 9752 and 6732.

\end{document}